   \DeclareMathOperator{\Id}{Id}
   \DeclareMathOperator{\e}{e}
   \DeclareMathOperator{\Lip}{Lip}
   \newcommand{\lb}{label}
   \newcommand{\lm}{leftmargin}
   \newcommand{\N}{\ensuremath{\mathds N}}
   \newcommand{\R}{\ensuremath{\mathds R}}
\newtheorem{theorem}{Theorem}
\newtheorem{lemma}{Lemma}
\newtheorem{example}{Example}
\renewcommand{\phi}{\varphi}
\newcommand{\eps}{\varepsilon}
\newcommand{\lbd}{\lambda}
\renewcommand{\ge}{\geqslant}
\renewcommand{\le}{\leqslant}
\newcommand{\prts}[1]{\left(#1\right)}
\newcommand{\prtsr}[1]{\left[#1\right]}
\newcommand{\normn}[1]{\left\|#1\right\|_n}
\newcommand{\set}[1]{\left\{#1\right\}}
\newcommand{\setm}[1]{\setminus\set{#1}}
\newcommand{\maxs}[1]{\max\set{#1}}
\newcommand{\pfrac}[2]{\prts{\dfrac{#1}{#2}}}
\newcommand{\dsum}{\displaystyle\sum}
\def\cA{\EuScript{A}}
\def\cB{\EuScript{B}}
\def\cF{\EuScript{F}}
\def\cV{\EuScript{V}}
\def\cX{\EuScript{X}}
\begin{document}
%-----------------------------------------------------------------------------%
\title
   [Nonuniform dichotomic behavior: Lipschitz invariant...]
   {Nonuniform dichotomic behavior: Lipschitz invariant manifolds for difference equations}
\author[Ant\'onio J. G. Bento]{Ant\'onio J. G. Bento}
\address{Ant\'onio J. G. Bento\\
   Departamento de Matem\'atica\\
   Universidade da Beira Interior\\
   6201-001 Covilh\~a\\
   Portugal}
\email{bento@ubi.pt}
%\author{S\'ilvia Roque}
%\address{S\'ilvia Roque\\
%   Departamento de Matem\'atica\\
%   Universidade da Beira Interior\\
%   6201-001 Covilh\~a\\
%   Portugal}
%\email{}
\author{C\'esar M. Silva}
\address{C\'esar M. Silva\\
   Departamento de Matem\'atica\\
   Universidade da Beira Interior\\
   6201-001 Covilh\~a\\
   Portugal}
\email{csilva@ubi.pt}
\urladdr{www.mat.ubi.pt/~csilva}
\date{\today}
\subjclass[2010]{37D10, 34D09, 37D25}
\keywords{Invariant manifolds, nonautonomous difference equations,
   nonuniform dichotomies}
%-----------------------------------------------------------------------------%
\begin{abstract}
   We obtain global and local theorems on the existence of invariant manifolds for perturbations of non autonomous linear difference equations assuming a very general form of dichotomic behavior for the linear equation. The results obtained include situations where the behavior is far from hyperbolic. We also give several new examples and show that our result includes as particular cases several previous theorems.
\end{abstract}
%-----------------------------------------------------------------------------%
\maketitle
%-----------------------------------------------------------------------------%
\section{Introduction}
%-----------------------------------------------------------------------------%

The study of invariant manifolds is a key subject in the qualitative theory of dynamical systems. Usually, hyperbolicity is the tool used to establish the existence of stable, unstable and central invariant manifolds for perturbations of linear systems. In the study of difference and differential equations, hyperbolicity is seldom given by the existence of an (uniform) exponential dichotomy, notion that goes back to the seminal work of Perron~\cite{Perron-MZ-1930}.

In some situations, particularly in the nonautonomous setting, the concept of uniform exponential dichotomy is too restrictive and it is important to look for more general hyperbolic behavior. Two different perspectives can be identified as ways to generalize the concept of uniform exponential dichotomy: on the one hand one can consider growth rates that are not necessarily exponential and on the other hand one can define dichotomies that depend on the initial time and therefore are nonuniform.
The first approach is present in the work of Pinto~\cite{Pinto-CMA-1994, Pinto-PFICDE-1995} and Naulin and Pinto~\cite{Naulin-Pinto-NATMA-1994} where the authors study stability of ordinary differential linear equations possessing $(h,k)$-dichotomies and notion is closely related with the notion of uniform dichotomy used in the book by P\"otzsche~\cite{Potzsche-LNM-2010}. The second approach lead to concepts of nonuniform exponential dichotomies and can be found in the work of Preda and Megan~\cite{Preda-Megan-BAusMS-1983} and Megan, Sasu and Sasu~\cite{Megan-Sasu-Sasu-IEOT-2002}, and in a different form, in the work of Barreira and Valls~\cite{Barreira-Valls-DCDS-A-2006}. The last authors studied deeply, in the general framework of Banach spaces, the existence of stable and central manifolds for nonlinear and nonautonomous perturbations of linear nonautonomous difference and differential equations assuming that the linear equation admits a nonuniform exponential dichotomy.

A natural step towards generalization is to obtain invariant manifolds for dichotomies that are both nonuniform and not necessarily exponential. This was the approach followed by the present authors that in~\cite{Bento-Silva-JFA-2009,Bento-Silva-QJM-2012} obtained stable manifolds for nonautonomous nonlinear perturbations of nonautonomous linear difference and differential equations, assuming the existence of a nonuniform polynomial dichotomy for the linear equation, and in~\cite{Bento-Silva-arXiv:1007.5039v1,Bento-Silva-NATMA-2012} where it was assumed that the linear equation belongs to a more general family of nonuniform dichotomies - the so called $(\mu,\nu)$-dichotomies. We note that the concept of nonuniform polynomial dichotomy considered in~\cite{Bento-Silva-JFA-2009} is not included as a particular case in the concept of $(\mu,\nu)$-dichotomy, though, letting $\mu_m=\nu_m=m$ in~\eqref{eq:mu-nu-dich} (see Example~\ref{example:mu-nu}), we can obtain a different version of polynomial dichotomy. This last version coincides with the notion of dichotomy considered by Barreira and Valls~\cite{Barreira-Valls-NATMA-2009}, where some conditions for the existence of polynomial behavior are obtained in terms of generalized polynomial Lyapunov exponents. We emphasize that our $(\mu,\nu)$-dichotomies allowed the obtention of stable manifolds in situations where the classical Lyapunov exponents are zero, for instance for the referred nonuniform polynomial dichotomies. It should also be mentioned that Barreira and Valls~\cite{Barreira-Valls-N-2010} were able to obtain stable manifolds for perturbations of linear equations assuming that the linear equation has some type of dichotomy given by general growth rates that correspond to set $\mu_m=\nu_m=\e^{\rho(m)}$ in~\eqref{eq:mu-nu-dich} with $\rho:\N \to \N$ increasing. Naturally, the last condition implies that the rates growth exponentially or faster, and do not include zero Lyapunov exponent situations like, for example, the referred polynomial behavior.

For a different approach to the existence of invariant manifolds for nonautonomous difference equations we recommend again the book by P\"otzsche~\cite{Potzsche-LNM-2010} that includes also historical notes about the development of the subject as well as a large set of references.

The several concepts of nonuniform dichotomy that have been considered in recent years led us to consider the following tasks: define a general framework that includes as particular cases the several definitions of nonuniform dichotomy and that still allows us to obtain results (existence and regularity of invariant manifolds, robustness results) that generalize the ones in the literature. This made us consider some type of general dichotomic behavior that consists simply in assuming the existence of a splitting into two sequences of invariant subspaces where the norms of the evolution map are bounded by some double sequences that depend on the initial and final times (see~\ref{eq:split4} and~\ref{eq:split5}). In this paper we establish the existence on Lipschitz invariant manifolds for perturbations of nonautonomous linear equations with the mentioned dichotomic behavior, obtaining an asymptotic behavior along the manifolds that is the same as the one assumed for the linear part in the corresponding subspaces.

Firstly we obtain a global theorem on the existence of invariant manifolds for perturbations of linear difference equations and as a consequence of this result we obtain a local result for a large set of perturbations. The perturbations considered are nonautonomous in the sense that we perturb the diference equation by a different Lipschitz function $f_n$ for each time $n$. Several examples illustrate our results and we also check that our result include, as particular cases, several previous theorems. We stress that our approach reveals the relation between the Lipschitz constants of the perturbations and the behavior assumed for the dichotomy. This allows us to require less from the dichotomic behavior by restricting the set of perturbations or to consider an extended set of perturbations by demanding more from the dichotomic behavior. We also stress that we include in our theorems situations that are far from being hyperbolic in any reasonable sense.

%-----------------------------------------------------------------------------%
\section{Notation and preliminaries}
%-----------------------------------------------------------------------------%
Let $\N_\ge^2=\set{\prts{m,n} \in \N^2 \colon m \ge n}$ and $\N_>^2 = \set{\prts{m,n} \in \N^2 \colon m > n}$. Let $B(X)$ be the space of bounded linear operators in a Banach space~$X$. Given a sequence $(A_n)_{n\in\N}$ of operators of $B(X)$, we write
   $$ \cA_{m,n} =
      \begin{cases}
         A_{m-1} \cdots A_n & \text{ if } m>n, \\
         \text{Id} & \text{ if } m=n,
      \end{cases}$$
for every $\prts{m,n} \in \N_\ge^2$.

Consider the linear difference equation
\begin{equation} \label{eq:lin:dif}
   x_{m+1} = A_m x_m, \ m \in \N.
\end{equation}

We say that equation~\eqref{eq:lin:dif}
admits an \textit{invariant splitting} if there exist bounded projections $P_n$, $n\in\N$, such that, for every $(m,n) \in \N_\ge^2$, we have
\begin{enumerate}[\lb=$($S$\arabic*)$,\lm=13mm]
   \item $ P_m \cA_{m,n} = \cA_{m,n} P_n$; 
   \item $\cA_{m,n}(\ker P_n) = \ker P_m$; 
   \item $\cA_{m,n}|_{\ker P_n} \colon \ker P_n \to \ker P_m$ is invertible. \end{enumerate}
In these conditions we define, for each $n \in \N$, the complementary projection $Q_n=\Id-P_n$ and the linear subspaces $E_n=P_n (X)$ and $F_n= \ker P_n = Q_n(X)$. As usual, we identify the vector spaces $E_n \times F_n$ and $E_n \oplus F_n$ as the same vector space.

Given double sequences $(a_{m,n})_{(m,n) \in \N_\ge^2}$ and $(b_{m,n})_{(m,n) \in \N_\ge^2}$ we say that equation~\eqref{eq:lin:dif} admits a \textit{general dichotomy with bounds $\prts{a_{m,n}}$ and $\prts{b_{m,n}}$} if it admits an invariant splitting such that
\begin{enumerate}[\lb=$($D$\arabic*)$,\lm=13mm]
   \item $\|\cA_{m,n} P_n\| \le a_{m,n}$; \label{eq:split4}
   \item $\|(\cA_{m,n}|_{F_n})^{-1}Q_m\| \le b_{m,n}$. \label{eq:split5}
\end{enumerate}

\begin{example}
Let $(a_n)_{n \in \N},(b_n)_{n \in \N},(c_n)_{n \in \N}$ and $(d_n)_{n \in \N}$ be given sequences of positive numbers such that $c_n \ge 1$ and $d_n \ge 1$ for every $n \in \N$. Consider the linear operators $A_n : \R^2 \to \R^2$ given by the diagonal matrices
      $$ A_n = \prtsr{
         \begin{array}{cc}
            \dfrac{a_n}{a_{n+1}} \, \
               \pfrac{c_n^{1-(-1)^n}}
               {c_{n+1}^{1+(-1)^n}}^{1/2}
            & 0 \\
            0 &
               \dfrac{b_{n+1}}{b_n} \, \
               \pfrac{d_n^{1-(-1)^n}}
               {d_{n+1}^{1+(-1)^n}}^{1/2}
         \end{array}}.$$
Considering the projections given by $P_n(x,y) = (x,0)$ and $Q_n(x,y) = (0,y)$ we have
      $$ \|\cA_{m,n}P_n\|
         = \dfrac{a_n}{a_m} \,
            \pfrac{c_n^{1-(-1)^n}}{c_m^{1+(-1)^m}}^{1/2}$$
   and
      $$ \|(\cA_{m,n}|_{F_n})^{-1}Q_m\|
         = \dfrac{b_n}{b_m} \,
            \pfrac
            {d_m^{1+(-1)^m}}{d_n^{1-(-1)^n}}^{1/2}$$
   and this implies
      \begin{equation*}
      \|\cA_{m,n}P_n\|
         \le \dfrac{a_n}{a_m} \, c_n
         \ \ \ \text{ and } \ \ \
         \|(\cA_{m,n}|_{F_n})^{-1}Q_m\|
         \le \dfrac{b_n}{b_m} \, d_m.
       \end{equation*}
   This example shows that for any given sequences $(a_n)_{n \in \N}$, $(b_n)_{n \in \N}$, $(c_n)_{n \in \N}$ and $(d_n)_{n \in \N}$ of positive numbers such that $c_n \ge 1$ and $d_n \ge 1$ for every $n \in \N$, there is always a sequence of linear maps $(A_n)_{n \in \N}$ that admits a general dichotomy with bounds
      $$ \left(\frac{a_n}{a_m}c_n\right)_{(m,n) \in \N_\ge^2}
         \quad \text{and} \quad
         \left(\frac{b_n}{b_m}d_m\right)_{(m,n) \in \N_\ge^2}$$
   for the projections above.
   
   In particular, if
      $$ a_m = \e^{am}, \quad
         b_m = \e^{-bm} \quad \text{ and } \quad
         c_m = d_m = D \e^{\eps m},$$
   for some constants $a < 0 \le b$ and $\eps>0$, we obtain an example of a linear equation that admits a nonuniform exponential dichotomy (in the sense of~\cite{Barreira-Valls-DCDS-A-2006}).
   
   Another particular case can be obtained by setting
      $$ a_m=m^a, \quad
         b_m=m^{-b} \quad \text{ and } \quad
         c_m=d_m= Dm^\eps,$$
   for some constants $a < 0 \le b$ and $\eps>0$. Here we obtain an example of a linear equation that admits a nonuniform polynomial dichotomy like the ones considered in~\cite{Barreira-Fan-Valls-Zhang-TMNA-2011}.
\end{example}
%-----------------------------------------------------------------------------%
\section{Existence of invariant Lipschitz manifolds}
%-----------------------------------------------------------------------------%
In this section we are going to state our results on the existence of Lipschitz invariant manifolds of the difference equation
   $$ x_{m+1} = A_m x_m + f_m(x_m), \ m \in \N,$$
where $f_m : X \to X$ are Lipschitz perturbations such that
\begin{equation}
   f_m(0)=0 \text{ for every $m \in \N$} \label{cond-f-0}.
\end{equation}
For every $f_m$ we define
\begin{equation}\label{eq:lip_const}
   \Lip(f_m) = \sup\set{\dfrac{\|f_m(u) - f_m(v)\|}{\|u - v\|}
      \colon u, v \in X,\ u \ne v}.
\end{equation}

Given $n\in \N$ and $v_n=(\xi,\eta)\in E_n \times F_n$, for each $m>n$ we write
\begin{equation}\label{eq:traj}
   v_m
   =\cF_{m,n}(v_n)
   = \cF_{m,n}(\xi,\eta)
   =(x_m,y_m) \in E_m \times F_m,
\end{equation}
with
\begin{equation} \label{eq:dyn}
   \cF_{m,n} =
      \begin{cases}
         (A_{m-1}+f_{m-1})\circ \cdots\circ (A_n+f_n) & \text{ if } m>n, \\
         \text{Id} & \text{ if } m=n.
      \end{cases}
\end{equation}

We denote by $\cX$ the space of sequences $(\phi_n)_{n \in \N}$ of functions $\phi_n\colon E_n \to F_n$ such that
\begin{align}
   & \phi_n(0)=0 \label{cond-phi-0} \\
   & \| \phi_n(\xi) - \phi_n(\bar\xi) \| \le \| \xi - \bar\xi\|
   \label{cond-phi-1}
\end{align}
for every $\xi$, $\bar{\xi} \in E_n$ and every $n \in \N$. Note that making $\bar\xi = 0$ in~\eqref{cond-phi-1} we have
\begin{equation} \label{cond-phi-1a}
   \|\phi_n(\xi)\| \le \|\xi\|
\end{equation}
for every $n \in \N$ and every $\xi \in E_n$.

Given $(\phi_n)_{n \in \N} \in \cX$, for each $n \in \N$, we consider the graph
\begin{equation}\label{def:V_phi,n}
   \cV_{\phi,n}
   = \set{(\xi,\phi_n(\xi)): \xi \in E_n},
\end{equation}
that we call \textit{global invariant manifold}.

We now state the result on the existence of global invariant manifolds.

\begin{theorem} \label{thm:global}
   Given a Banach space $X$, suppose that equation~\eqref{eq:lin:dif} admits a general dichotomy with bounds $(a_{m,n})_{(m,n) \in \N_\ge^2}$ and $(b_{m,n})_{(m,n) \in \N_\ge^2}$. Let $f_m : X \to X$ be a sequence of Lipschitz functions satisfying \eqref{cond-f-0}. Assume that
   \begin{equation} \label{eq:CondicaoTeo}
      \lim_{m \to +\infty} a_{m,n} b_{m,n} = 0
   \end{equation}
   for every $n \in \N$,
   \begin{equation}\label{eq:alpha}
      \alpha =
      \sup_{(m,n) \in \N_>^2}
      \dfrac{1}{a_{m,n}} \dsum_{k=n}^{m-1} a_{m,k+1} a_{k,n} \Lip(f_k)
      < + \infty
   \end{equation}
   and
   \begin{equation}\label{eq:beta}
      \beta =
      \sup_{n \in \N} \dsum_{k=n}^{+\infty} b_{k+1,n} a_{k,n} \Lip(f_k)
      < \infty.
   \end{equation}
   If
   \begin{equation}\label{ine:alpha_beta}
      2 \alpha + \max\set{2 \beta, \sqrt{\beta}} < 1,
   \end{equation}
   then there is a unique $\phi \in \cX$ such that
   \begin{equation} \label{thm:global:invar}
      \cF_{m,n}(\cV_{\phi,n})
      = \cV_{\phi,m} \
      \text{for every } \prts{m,n} \in \N_\ge^2,
   \end{equation}
   where $\cV_{\phi,n}$ and $\cV_{\phi,m}$ are given by~\eqref{def:V_phi,n}.
   Furthermore, we have
   \begin{equation}\label{thm:ineq:norm:F_mn(xi...)-F_mn(barxi...)}
      \|\cF_{m,n}(\xi,\phi_n(\xi)) -\cF_{m,n}(\bar \xi,\phi_n(\bar \xi)) \|
      \le \dfrac{2}{1-2\alpha} a_{m,n}\, \|\xi-\bar \xi\|.
   \end{equation}
   for every $\prts{m,n} \in \N_\ge^2$ and every $\xi, \bar\xi \in E_n$.
\end{theorem}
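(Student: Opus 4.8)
The plan is to construct the invariant manifold as a fixed point of an appropriate operator on the complete metric space $\cX$. First I would reformulate the invariance condition $\cF_{m,n}(\cV_{\phi,n})=\cV_{\phi,m}$ as a functional equation. Writing a trajectory starting at $(\xi,\phi_n(\xi))$ componentwise as $(x_m,y_m)$ via the variation-of-constants formula adapted to the splitting, the invariance requires $y_m=\phi_m(x_m)$ for all $m\ge n$. Projecting the dynamics with $P_m$ and $Q_m$ gives
\begin{align*}
   x_m &= \cA_{m,n}\xi + \dsum_{k=n}^{m-1}\cA_{m,k+1}P_{k+1}f_k(x_k,y_k),\\
   y_m &= \cA_{m,n}\phi_n(\xi) + \dsum_{k=n}^{m-1}\cA_{m,k+1}Q_{k+1}f_k(x_k,y_k),
\end{align*}
and, crucially, condition~\eqref{eq:CondicaoTeo} together with the bounds~\ref{eq:split4} and~\ref{eq:split5} lets me solve the unstable component backwards: invariance forces
   $$\phi_n(\xi) = -\dsum_{k=n}^{+\infty}(\cA_{k+1,n}|_{F_n})^{-1}Q_{k+1}f_k(x_k,y_k),$$
provided the series converges, which is exactly where~\eqref{eq:beta} and the decay~\eqref{eq:CondicaoTeo} enter. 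So the substantive step is to set up, for fixed $n$ and $\xi$, an auxiliary fixed-point problem in the space of sequences $(x_k)_{k\ge n}$ (equivalently $(x_k,\phi_k(x_k))$) to produce the trajectory, and then define the graph map $\phi_n$ by the above series.

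The argument naturally splits into two nested fixed-point problems. The inner one: given $(\phi_k)_{k\ge n}\in\cX$ and $(\xi,\eta)$, show there is a unique bounded (in a suitable weighted sense, with weight $a_{m,n}$) sequence $(x_m)_{m\ge n}$ solving the forward equation for $x_m$ with $y_k$ replaced by $\phi_k(x_k)$; here the Lipschitz constant of the induced operator is controlled by $\alpha$ from~\eqref{eq:alpha}, and $2\alpha<1$ (which follows from~\eqref{ine:alpha_beta}) gives the contraction, simultaneously yielding the Lipschitz-in-$\xi$ estimate~\eqref{thm:ineq:norm:F_mn(xi...)-F_mn(barxi...)} with constant $2/(1-2\alpha)$. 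The outer one: define an operator $T$ on $\cX$ by letting $(T\phi)_n(\xi)$ equal the series above evaluated along the trajectory produced by the inner problem; I must check $T\phi\in\cX$, i.e. $(T\phi)_n(0)=0$ (immediate from $f_k(0)=0$ and uniqueness, giving the zero trajectory) and the Lipschitz bound~\eqref{cond-phi-1} with constant $1$. The latter is where $\beta$ — more precisely the quantity $\max\{2\beta,\sqrt\beta\}$ — appears: estimating $\|(T\phi)_n(\xi)-(T\phi)_n(\bar\xi)\|$ using~\ref{eq:split5}, the difference of trajectories, and $\Lip(f_k)$, one bounds it by (roughly) $\beta$ times the sup-norm distance between the two trajectories, which by the inner estimate is $\le 2/(1-2\alpha)$ times $\|\xi-\bar\xi\|$ plus a term involving $\|\phi_k(x_k)-\phi_k(\bar x_k)\|\le\|x_k-\bar x_k\|$; carefully bookkeeping these, the hypothesis~\eqref{ine:alpha_beta} is exactly what forces the resulting constant to be $\le 1$.

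Then I would show $T$ is a contraction on $\cX$ with respect to the metric $d(\phi,\psi)=\sup_{n}\sup_{\xi\ne0}\|\phi_n(\xi)-\psi_n(\xi)\|/\|\xi\|$: comparing the trajectories generated by $\phi$ and $\psi$ from the same $(\xi,\phi_n(\xi))$ versus $(\xi,\psi_n(\xi))$... actually more carefully, comparing along their respective graphs, the difference is again controlled by $\alpha$ and $\beta$, and~\eqref{ine:alpha_beta} gives a contraction factor strictly below $1$ (the term $\sqrt\beta$ in~\eqref{ine:alpha_beta} is likely there to absorb a square-root loss coming from closing this estimate — this is the subtle point where the specific combination $2\alpha+\max\{2\beta,\sqrt\beta\}$ rather than something cleaner is dictated). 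The unique fixed point $\phi$ then satisfies the functional equation, hence~\eqref{thm:global:invar}, and~\eqref{thm:ineq:norm:F_mn(xi...)-F_mn(barxi...)} has already been established for the trajectory along the graph. Finally I would verify the surjectivity half of~\eqref{thm:global:invar} (that every point of $\cV_{\phi,m}$ is $\cF_{m,n}$ of a point of $\cV_{\phi,n}$) using invertibility of $\cA_{m,n}|_{\ker P_n}$ plus the contraction to invert the $x$-dynamics.

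The main obstacle I anticipate is the bookkeeping in the outer step: coupling the two fixed-point problems so that the Lipschitz constant of $(T\phi)_n$ stays $\le 1$ and $T$ is simultaneously a contraction, all under the single clean hypothesis~\eqref{ine:alpha_beta}. Getting the constants to line up — in particular seeing why $\max\{2\beta,\sqrt\beta\}$ is the right quantity and why the factor $2$ (rather than $1$) multiplies $\alpha$ — will require choosing the weighted norms and the order of estimation with some care; the rest is a routine, if lengthy, Banach-fixed-point computation.
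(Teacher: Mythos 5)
Your proposal is correct and follows essentially the same route as the paper: an inner fixed-point problem for the weighted $x$-sequence (in the space the paper calls $\cB_n$, with contraction factor $2\alpha$), reformulation of the graph equation as a backward series made convergent by~\eqref{eq:CondicaoTeo} and~\eqref{eq:beta}, and an outer contraction $\Phi$ on $(\cX,d)$ with factor $\beta/(1-2\alpha)^2$. Your reading of why $\max\{2\beta,\sqrt{\beta}\}$ appears is exactly right: $2\alpha+2\beta<1$ forces $(\Phi\phi)_n$ to be $1$-Lipschitz so that $\Phi(\cX)\subset\cX$, while $2\alpha+\sqrt{\beta}<1$ is precisely $\beta/(1-2\alpha)^2<1$, the outer contraction condition.
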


We will now consider the problem of existence of local invariant manifolds.

Let $B(r)$ denote the open ball of radius $r$ in $X$ and define
\begin{equation}\label{eq:manifold-local}
\cV^*_{\phi,n,r}=\set{(\xi,\phi_n(\xi)) \in \cV_{\phi,n} \colon \xi \in B(r)}.
\end{equation}

The next theorem can be obtained from Theorem~\ref{thm:global}.

\begin{theorem} \label{thm:local}
   Given a Banach space $X$, suppose that equation~\eqref{eq:lin:dif} admits a general dichotomy with bounds $(a_{m,n})_{(m,n)\in \N_\ge^2}$ and $(b_{m,n})_{(m,n)\in \N_\ge^2}$. For each $m \in \N$, let $f_m : X \to X$ be a Lipschitz function in $B(r_m)$  satisfying \eqref{cond-f-0}. Assume that
   \begin{equation} \label{eq:CondicaoTeo-local}
      \lim_{m \to +\infty} a_{m,n} b_{m,n} = 0
   \end{equation}
   for every $n \in \N$,
   \begin{equation}\label{eq:tau-local}
      \alpha =
      \sup_{(m,n) \in \N_>^2}
      \dfrac{1}{a_{m,n}} \dsum_{k=n}^{m-1} a_{m,k+1} a_{k,n}
      \Lip(f_k|_{B(r_k)}) < + \infty
   \end{equation}
   and
   \begin{equation}\label{eq:lbd-local}
      \beta =
      \sup_{n \in \N} \dsum_{k=n}^{+\infty} b_{k+1,n} a_{k,n}
      \Lip(f_k|_{B(r_k)}) < + \infty.
   \end{equation}
   If, for each $n \in \N$,
   \begin{equation}\label{ine:sn-local}
      s_n= \max\left\{ 1, \, \frac{2}{1-4\alpha} \, \sup_{m \ge n} \frac{a_{m,n} r_n}{r_m} \right\}< +\infty
   \end{equation}
   and
   \begin{equation}\label{ine:alpha_beta-local}
      4 \alpha + \max\set{4 \beta, \sqrt{2\beta}} < 1,
   \end{equation}
   then there is $\phi \in \cX$ such that
   \begin{equation} \label{thm:global:invar-local}
      \cF_{m,n}(\cV^*_{\phi,n,r_n/(2s_n)})
      \subseteq \cV^*_{\phi,m,r_m} \
      \text{for every } \prts{m,n} \in \N_\ge^2.
   \end{equation}
   
   Furthermore, we have
   \begin{equation}\label{thm:ineq:norm:F_mn(xi...)-F_mn(barxi...)-local}
      \|\cF_{m,n}(\xi,\phi_n(\xi)) -\cF_{m,n}(\bar \xi,\phi_n(\bar \xi)) \|
      \le \dfrac{2}{1-4\alpha} \, a_{m,n}\, \|\xi-\bar \xi\|.
   \end{equation}
   for every $\prts{m,n} \in \N_\ge^2$ and every $\xi, \bar\xi \in B(r_n)$.
\end{theorem}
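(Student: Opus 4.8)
The plan is to deduce Theorem~\ref{thm:local} from Theorem~\ref{thm:global} by truncating each perturbation outside the ball $B(r_m)$. For every $m \in \N$ let $\rho_m \colon X \to \overline{B(r_m)}$ be the radial retraction, that is, $\rho_m(x) = x$ if $\|x\| \le r_m$ and $\rho_m(x) = r_m x/\|x\|$ if $\|x\| > r_m$; recall that $\rho_m(0)=0$, that $\rho_m$ is the identity on $\overline{B(r_m)}$, and that $\Lip(\rho_m) \le 2$ in an arbitrary normed space. Extending $f_m$ to $\overline{B(r_m)}$ by uniform continuity (which does not change its Lipschitz constant) and setting $\tilde f_m = f_m \circ \rho_m$, we obtain globally defined maps $\tilde f_m \colon X \to X$ with $\tilde f_m(0)=0$, with $\tilde f_m = f_m$ on $B(r_m)$, and with
\[ \Lip(\tilde f_m) \le \Lip(\rho_m)\, \Lip\bigl(f_m|_{B(r_m)}\bigr) \le 2\, \Lip\bigl(f_m|_{B(r_m)}\bigr). \]

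First I would check that $(\tilde f_m)_{m \in \N}$ satisfies the hypotheses of Theorem~\ref{thm:global}. Condition~\eqref{eq:CondicaoTeo} coincides with~\eqref{eq:CondicaoTeo-local}. Writing $\tilde\alpha$ and $\tilde\beta$ for the numbers~\eqref{eq:alpha} and~\eqref{eq:beta} computed with $\Lip(\tilde f_k)$ in place of $\Lip(f_k)$, the bound above yields $\tilde\alpha \le 2\alpha$ and $\tilde\beta \le 2\beta$, with $\alpha,\beta$ as in~\eqref{eq:tau-local}--\eqref{eq:lbd-local}, so both are finite; moreover $2\tilde\beta \le 4\beta$ and $\sqrt{\tilde\beta} \le \sqrt{2\beta}$, hence
\[ 2\tilde\alpha + \max\set{2\tilde\beta,\,\sqrt{\tilde\beta}} \le 4\alpha + \max\set{4\beta,\,\sqrt{2\beta}} < 1 \]
by~\eqref{ine:alpha_beta-local}, i.e.\ \eqref{ine:alpha_beta} holds for $(\tilde f_m)$. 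Theorem~\ref{thm:global} then produces a unique $\phi \in \cX$ with $\widetilde\cF_{m,n}(\cV_{\phi,n}) = \cV_{\phi,m}$ for every $(m,n) \in \N_\ge^2$, where $\widetilde\cF$ denotes the dynamics~\eqref{eq:dyn} built from the maps $A_m + \tilde f_m$, together with
\[ \bigl\|\widetilde\cF_{m,n}(\xi,\phi_n(\xi)) - \widetilde\cF_{m,n}(\bar\xi,\phi_n(\bar\xi))\bigr\| \le \frac{2}{1-2\tilde\alpha}\, a_{m,n}\, \|\xi-\bar\xi\| \le \frac{2}{1-4\alpha}\, a_{m,n}\, \|\xi-\bar\xi\|, \]
where the last step uses $\tilde\alpha \le 2\alpha$ and $4\alpha < 1$. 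This $\phi$ will be the one in the statement; it remains to replace $\widetilde\cF$ by $\cF$ on the appropriate sets.

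The core of the argument is to show that the $\widetilde\cF$-orbit of a point of the small local manifold never leaves the balls on which $\tilde f_m$ agrees with $f_m$; this is exactly what the definition of $s_n$ in~\eqref{ine:sn-local} is engineered for. Fix $(m,n) \in \N_\ge^2$ and $\xi \in B(r_n/(2s_n))$, and put $(x_k,\phi_k(x_k)) = \widetilde\cF_{k,n}(\xi,\phi_n(\xi))$ for $n \le k \le m$, which is legitimate by the invariance just obtained. Since $\phi_n(0)=0$ and $\tilde f_m(0)=0$, each $\widetilde\cF_{k,n}$ fixes the origin, so taking $\bar\xi=0$ in the displayed estimate and using $s_n \ge \frac{2}{1-4\alpha}\,\frac{a_{k,n}r_n}{r_k}$ gives
\[ \|x_k\| \le \bigl\|\widetilde\cF_{k,n}(\xi,\phi_n(\xi))\bigr\| \le \frac{2}{1-4\alpha}\, a_{k,n}\, \|\xi\| < \frac{1}{1-4\alpha}\cdot\frac{a_{k,n}r_n}{s_n} \le \frac{r_k}{2}, \]
whence $\|x_k + \phi_k(x_k)\| \le \|x_k\| + \|\phi_k(x_k)\| \le 2\|x_k\| < r_k$ by~\eqref{cond-phi-1a}. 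Thus at each step $n \le k \le m$ the iterate lies in $B(r_k)$, where $A_k + \tilde f_k = A_k + f_k$, so an induction on $k$ gives $\widetilde\cF_{k,n}(\xi,\phi_n(\xi)) = \cF_{k,n}(\xi,\phi_n(\xi))$ throughout. In particular $\cF_{m,n}(\xi,\phi_n(\xi)) = (x_m,\phi_m(x_m))$ with $\|x_m\| < r_m/2 < r_m$, that is, $\cF_{m,n}(\xi,\phi_n(\xi)) \in \cV^*_{\phi,m,r_m}$, which is~\eqref{thm:global:invar-local}; and~\eqref{thm:ineq:norm:F_mn(xi...)-F_mn(barxi...)-local} follows from the displayed Lipschitz bound for $\widetilde\cF$ together with this coincidence of $\cF$ and $\widetilde\cF$ on the relevant points.

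I expect the last part --- the bookkeeping that pins down the exact constant in $s_n$ and keeps every intermediate iterate inside $B(r_k)$ --- to be the only genuine obstacle; the rest is a direct transcription of Theorem~\ref{thm:global}. One technical point to watch is the norm on $E_k \oplus F_k$: it must be the one used in the proof of Theorem~\ref{thm:global} (for which the $E_k$-component $\|x_k\|$ is controlled by the norm of the pair $(x_k,\phi_k(x_k))$), so that the chain of inequalities above is valid as written.
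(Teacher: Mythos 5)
Your proposal is correct and follows essentially the same route as the paper: truncate each $f_m$ by composing with the radial retraction onto $\overline{B(r_m)}$ (the paper writes the truncation formula explicitly rather than via $\rho_m$, but it is the same map and the same bound $\Lip(\tilde f_m)\le 2\Lip(f_m|_{B(r_m)})$), verify that the hypotheses of Theorem~\ref{thm:global} hold for $(\tilde f_m)$ with $\tilde\alpha\le 2\alpha$, $\tilde\beta\le 2\beta$, apply that theorem, and then use the definition of $s_n$ to show the orbit of a point of $\cV^*_{\phi,n,r_n/(2s_n)}$ stays inside the balls where $\tilde f_k=f_k$. Your explicit induction over the intermediate indices $n\le k\le m$ spells out a step the paper states only compactly, and the norm worry you flag is actually harmless: you never need $\|x_k\|\le\|(x_k,\phi_k(x_k))\|$, since the chain already bounds $\|\widetilde\cF_{k,n}(\xi,\phi_n(\xi))\|$ itself by $r_k/2<r_k$, which is all that the induction requires.
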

%-----------------------------------------------------------------------------%
\section{Examples} 
%-----------------------------------------------------------------------------%
In this section we will give some examples that illustrate our theorem and show that it contains as a particular case several results in the literature.

Firstly, we present some results on the existence of global invariant manifolds.

\begin{example}
   For each $(m,n) \in \N_\ge^2$, set
      $$ a_{m,n}= \dfrac{a_n}{a_m} c_n \quad \text{ and } \quad
         b_{m,n}=\dfrac{b_n}{b_m} d_m,$$
   where $(a_m)_{m \in \N}$, $(b_m)_{m \in \N}$, $(c_m)_{m \in \N}$ and $(d_m)_{m \in \N}$ are some nondecreasing sequences of positive numbers. In this particular case, conditions~\eqref{eq:CondicaoTeo}, \eqref{eq:alpha} and \eqref{eq:beta} correspond respectively to the conditions
	\begin{equation} \label{eq:condddd}
	  \lim_{m \to +\infty} \dfrac{d_m}{a_m b_m} = 0,
	\end{equation}
	  $$\alpha=\sum_{k=1}^{+\infty} \dfrac{a_{k+1}}{a_k} c_{k+1}
	\Lip(f_k) < +\infty$$
   and
 	 $$\beta=\sup_{n \in \N} b_n a_n c_n \sum_{k=n}^{+\infty}
 	 \dfrac{d_{k+1}}{a_k b_{k+1}} \Lip(f_k) < +\infty.$$
   Thus, if the numbers $\Lip(f_k)$ are small enough so that~\eqref{ine:alpha_beta} holds, we obtain a sequence of invariant manifolds $\cV_{\phi,n}$ given by~\eqref{def:V_phi,n} where the decay is given by
      $$ \|\cF_{m,n}(\xi,\phi_n(\xi)) -\cF_{m,n}(\bar \xi,\phi_n(\bar \xi)) \|
         \le \dfrac{2}{1-2\alpha} \, \dfrac{a_n}{a_m} c_n\, \|\xi-\bar \xi\|,$$
   for every $\prts{m,n} \in \N_\ge^2$ and every $\xi, \bar\xi \in E_n$.

   It is easy to see that, if we set
      $$ \Lip(f_k)
         \le \maxs{\dfrac{a_k}{a_{k+1}c_{k+1}},
            \dfrac{a_k b_{k+1}}{d_{k+1} \max\limits_{1\le i \le k} \prts{a_ib_ic_i}}}
            \lbd_k \quad \text{with} \quad \lambda=\sum_{k=1}^{+\infty} \lambda_k < \frac{1}{4},$$
   conditions~\eqref{eq:alpha}, \eqref{eq:beta} and~\eqref{ine:alpha_beta} are verified and thus, provided that~\eqref{eq:condddd} holds, we always have perturbations with small enough non-zero Lipschitz constants such that the perturbed equations has invariant manifolds with the behavior given in our theorem.

   In particular, setting
      $$ a_m = \e^{am}, \quad
         b_m = \e^{-bm} \quad \text{ and } \quad
         c_m = d_m = D \e^{\eps m},$$
   for some constants $a < 0 \le b$ and $\eps>0$, we get
      $$ a_{m,n}=D\e^{a(m-n)+\eps n} \quad \text{and} \quad
         b_{m,n}=D\e^{b(m-n)+\eps m},$$
   and we obtain Theorem 3 in~\cite{Barreira-Valls-DCDS-A-2012}. Note that for these dichotomies condition~\eqref{eq:condddd} is equivalent to condition $a+\eps<b$, already present in the referred paper.

   Another particular case can be obtained by setting
      $$ a_m=m^a, \quad
         b_m=m^{-b} \quad \text{ and } \quad
         c_m=d_m= Dm^\eps,$$
   for some constants $a < 0 \le b$ and $\eps>0$. In this case we get
      $$ a_{m,n} = D\left( \dfrac{m}{n}\right)^a n^\eps  \quad \text{and} \quad
         b_{m,n}=D \left( \dfrac{m}{n}\right)^{-b} m^\eps,$$
   corresponding to the dichotomies already considered for instance in~\cite{Barreira-Valls-NATMA-2009,Barreira-Fan-Valls-Zhang-TMNA-2011}.
   In this case condition~\eqref{eq:condddd} is also equivalent to $a+\eps<b$ and assuming this condition we obtain global Lipschitz stable manifolds for small enough Lipschitz perturbations of linear equations admitting these polynomial dichotomies. As far as we are aware in this polynomial setting this result was obtained here for the first time.
\end{example}

\begin{example}
   For each $(m,n) \in \N_\ge^2$, set
      $$ a_{m,n} = D(m-n+1)^a n^\eps \quad \text{and} \quad
         b_{m,n} = D(m-n+1)^{-b} m^\eps,$$
   for some constants $a < 0 \le b$ and $\eps>0$. Set also
      $$ \Lip(f_k) \le \delta k^{-2\eps-1}.$$
   In this case, conditions~\eqref{eq:alpha} and \eqref{eq:beta} are satisfied, condition~\eqref{eq:CondicaoTeo} corresponds to $a+\eps<b$ and condition~\eqref{ine:alpha_beta} is satisfied if we consider a small enough $\delta>0$.
   Thus our theorem allows us to obtain a sequence of invariant manifolds $\cV_{\phi,n}$ given by~\eqref{def:V_phi,n} where the decay is given by
   $$\|\cF_{m,n}(\xi,\phi_n(\xi)) -\cF_{m,n}(\bar \xi,\phi_n(\bar \xi)) \|
   \le \dfrac{2D}{1-2\alpha} \, (m-n+1)^a n^\eps \, \|\xi-\bar \xi\|,$$
   for every $\prts{m,n} \in \N_\ge^2$ and every $\xi, \bar\xi \in E_n$.

   This result corresponds to Theorem~2 in~\cite{Bento-Silva-JFA-2009}.
\end{example}

\begin{example}
   For each $(m,n) \in \N_\ge^2$, set
      $$ a_{m,n} = D\e^{a(m-n)+\eps n} \quad \text{and} \quad
         b_{m,n}=D \prts{\frac{m}{n}}^{-b} m^\eps,$$
   for some constants $a < 0 \le b$ and $\eps>0$. Set also
      $$ \Lip(f_k) \le \delta k^{-2\eps-1}.$$
   In this case, all conditions of our theorem are satisfied provided that we consider a small enough $\delta>0$.
   Thus our theorem allows us to obtain a sequence of invariant manifolds $\cV_{\phi,n}$ given by~\eqref{def:V_phi,n} where the decay is given by
      $$ \|\cF_{m,n}(\xi,\phi_n(\xi)) -\cF_{m,n}(\bar \xi,\phi_n(\bar \xi)) \|
         \le \dfrac{2D}{1-2\alpha} \, \e^{a(m-n)+\eps n} \, \|\xi-\bar \xi\|,$$
   for every $\prts{m,n} \in \N_\ge^2$ and every $\xi, \bar\xi \in E_n$.
\end{example}

\begin{example}
   For each $(m,n) \in \N_\ge^2$, set
      $$ a_{m,n} = L \quad \text{and} \quad
         b_{m,n}=D \e^{a(m-n)+\eps m},$$
   for some constants $L \ge 1$, $a<0$ and $\eps>0$. Set also $\Lip(f_k)=\delta \e^{-\eps k}$. Once again all conditions of our theorem are satisfied provided that we consider a small enough $\delta>0$.
   Thus our theorem allows us to obtain a sequence of invariant manifolds $\cV_{\phi,n}$ given by~\eqref{def:V_phi,n} where we have
   $$\|\cF_{m,n}(\xi,\phi_n(\xi)) -\cF_{m,n}(\bar \xi,\phi_n(\bar \xi)) \|
   \le \dfrac{2L}{1-2\alpha} \, \|\xi-\bar \xi\|,$$
   for every $\prts{m,n} \in \N_\ge^2$ and every $\xi, \bar\xi \in E_n$. That is, we obtain an upper bound for the distance of the iterates of any two points in the manifolds.

   In particular, setting for each $n \in \N$,
   $$A_n=
   \left[
   \begin{array}{cc}
   L^{(-1)^n} & 0 \\
   0 & 1/2
   \end{array}
   \right]
   $$
   we obtain $b_{m,n}=2^{-(m-n)}$ and $a_{m,n} \in \{1/L,1,L\}$ (and therefore $a_{m,n} \le L$). This shows that the given sequence of matrices satisfies the hypothesis above. This example shows that we can still obtain some informations for the dynamics in situations that are far from being hyperbolic in any reasonable sense.
\end{example}

The next examples are special cases of theorem~\ref{thm:local}.

\begin{example}
   For each $(m,n) \in \N_\ge^2$, set $a_{m,n}= \dfrac{a_n}{a_m} c_n$ and $b_{m,n}=\dfrac{b_n}{b_m} d_m$ where $(a_m)_{m \in \N}$, $(b_m)_{m \in \N}$, $(c_m)_{m \in \N}$ and $(d_m)_{m \in \N}$ are some sequences of positive numbers and assume that, for each $k \in \N$, we have
	  $$ \|f_k(u)-f_k(v)\| \le c \|u-v\| (\|u\|+\|v\|)^q,$$
   for some constants $c>0$ and $q \ge 0$. It is immediate that $f_k|_{B(r_k)}$ is Lipschitz with Lipschitz constant less or equal to $c 2^q r_k^q$. Thus, conditions~\eqref{eq:CondicaoTeo-local}, \eqref{eq:tau-local}, \eqref{eq:lbd-local} and~\eqref{ine:sn-local} correspond respectively to the conditions
	\begin{equation*} 
   	\lim_{m \to +\infty} \dfrac{d_m}{a_m b_m} = 0,
	\end{equation*}
	\begin{equation*} 
	  \alpha = 
      c 2^q \sum_{k=1}^{+\infty} \dfrac{a_{k+1}}{a_k} c_{k+1} r_k^q
      < +\infty,
	\end{equation*}
	\begin{equation*} 
      \beta = c 2^q 
      \sup_{n \in \N} a_n b_n c_n 
      \sum_{k=n}^{+\infty} \dfrac{d_{k+1} r_k^q}{a_k b_{k+1}} < +\infty
 	\end{equation*}
   and
	\begin{equation*} 
      a_n c_n r_n \sup_{m \ge n} a_m^{-1} r_m^{-1} < +\infty.
	\end{equation*}
   Thus, if the radius of the balls $B(r_k)$ are small enough so that~\eqref{ine:alpha_beta-local} holds, we obtain a sequence of invariant manifolds given by~\eqref{eq:manifold-local} where the decay is given by
      $$ \|\cF_{m,n}(\xi,\phi_n(\xi)) -\cF_{m,n}(\bar \xi,\phi_n(\bar \xi)) \|
         \le \dfrac{2}{1-4\alpha} \, \dfrac{a_n}{a_m} c_n\, \|\xi-\bar \xi\|,$$
   for every $\prts{m,n} \in \N_\ge^2$ and every $\xi, \bar\xi \in E_n$. Note that, for any dichotomy in this example, it is always possible to choose small enough balls where our hypothesis hold.

   As a particular case, given $a < 0 \le b$ and $\eps > 0$, we can put
      $$ a_m= \e^{- a m}, \quad
         b_m= \e^{b m} \quad \text{ and }
         c_m= d_m=D \e^{\eps m},$$
   for each $m \in \N$. We can also set $r_k=\delta \e^{-\beta k}$ for each $k \in \N$. In this setting condition~\eqref{eq:CondicaoTeo-local} is equivalent to $a+\eps<b$, conditions~\eqref{eq:tau-local} and \eqref{eq:lbd-local} are equivalent to $\eps-\beta q<0$ and condition~\eqref{ine:sn-local} is equivalent to $a+\beta \le 0$ (this condition implies $a+\eps<b$).  With this setting we obtain the result in~\cite{Barreira-Valls-DCDS-A-2006}. In fact we slightly improve that result since in our case we can have $\beta>\frac{2\eps}{q}$ while $\beta=\eps+\frac{2\eps}{q}$ in~\cite{Barreira-Valls-DCDS-A-2006}.
\end{example}

\begin{example} \label{example:mu-nu}
   Given $a < 0 \le b$ and $\eps > 0$, for each $(m,n) \in \N_\ge^2$, set
   \begin{equation} \label{eq:mu-nu-dich}
      a_{m,n}= D \prts{\dfrac{\mu_m}{\mu_{n-1}}}^a \nu_{n-1}^\eps
      \quad \text{and} \quad
      b_{m,n}=D \prts{\dfrac{\mu_{m-1}}{\mu_n}}^{-b} \nu_{m-1}^\eps
   \end{equation}
   where $(\mu_m)_{m \in \N_0}$ and $(\nu_m)_{m \in \N_0}$ are growth rates, that is these sequences are non decreasing, converge to $+\infty$ and $\mu_0=\nu_0=1$. Also assume, for each $k \in \N$, that $r_k=\delta R_k$ and that we have
   	$$ \|f_k(u)-f_k(v)\| \le c \|u-v\| (\|u\|+\|v\|)^q,$$
   for some constants $c>0$ and $q \ge 0$.
   In this case, conditions~\eqref{eq:CondicaoTeo-local}, \eqref{eq:tau-local}, and~\eqref{ine:sn-local} correspond respectively to the conditions
	\begin{equation*}
   	\lim_{m \to +\infty} \dfrac{\mu_m^a \nu_{m-1}^\eps}{\mu_{m-1}^b} = 0,
	\end{equation*}
	\begin{equation*}
	  \sum_{k=1}^{+\infty} \nu_k^\eps R_k^q < +\infty,
	\end{equation*}
	and
	\begin{equation*}
      D \frac{R_n \nu_{n-1}^\eps}{\mu_{n-1}}^a \sup_{m \ge n}
   	\frac{\mu_m^a}{R_m} < +\infty.
	\end{equation*}
	Additionally
	\begin{equation*}
      \sup_{n \in \N} \nu_{n-1}^\eps \sum_{k=n}^{+\infty} \nu_k^\eps R_k^q
      < +\infty
 	\end{equation*}
   implies \eqref{eq:lbd-local}.

   Thus, if the provided $c>0$ is small enough so that~\eqref{ine:alpha_beta-local} holds, we obtain a sequence of invariant manifolds given by~\eqref{eq:manifold-local} where the decay is given by
      $$ \|\cF_{m,n}(\xi,\phi_n(\xi)) -\cF_{m,n}(\bar \xi,\phi_n(\bar \xi))\|
         \le \dfrac{2D}{1-4\alpha} \, \prts{\dfrac{\mu_m}{\mu_{n-1}}}^a
            \nu_{n-1}^\eps \, \|\xi-\bar \xi\|,$$
   for every $\prts{m,n} \in \N_\ge^2$ and every $\xi, \bar\xi \in E_n$. Letting now $R_k=\mu_k^a$ we obtain Theorem 1 in~\cite{Bento-Silva-NATMA-2012}. In fact, in this case, condition~\eqref{eq:CondicaoTeo-local} is equivalent to (13) in~\cite{Bento-Silva-NATMA-2012}, condition~\eqref{eq:tau-local} is equivalent to (12) in~\cite{Bento-Silva-NATMA-2012}, condition~\eqref{eq:lbd-local} is implied by (15) in~\cite{Bento-Silva-NATMA-2012} and~\eqref{ine:sn-local} is immediate.
\end{example}

\begin{example}
   For each $(m,n) \in \N_\ge^2$, set
      $$ a_{m,n} = D(m-n+1)^a n^\eps \quad \text{and} \quad
         b_{m,n} = D(m-n+1)^{-b} m^\eps,$$
   for some constants $a < 0 \le b$ and $\eps>0$. Assume further that
   for each $k \in \N$ we have
   	$$ \|f_k(u)-f_k(v)\| \le c \|u-v\| (\|u\|+\|v\|)^q,$$
   for some $c>0$ and $q>0$ and that $r_k=\delta k^{-\gamma}$. In this case, conditions~\eqref{eq:tau-local} and \eqref{eq:lbd-local} are satisfied provided $\gamma>\frac{2\eps+1}{q}$, condition~\eqref{ine:sn-local} is satisfied if $\gamma \le -a$, condition~\eqref{eq:CondicaoTeo-local} corresponds to $a+\eps<b$ and condition~\eqref{ine:alpha_beta} is satisfied if we consider a small enough $\delta>0$.
   The result obtained corresponds to Theorem~1 in~\cite{Bento-Silva-JFA-2009}.
\end{example}
%-----------------------------------------------------------------------------%
\section{Proof of Theorem~\ref{thm:global}}
%-----------------------------------------------------------------------------%
In this section we will prove Theorem~\ref{thm:local}.
Given $n\in \N$ and $v_n=(\xi,\eta)\in E_n \times F_n$, using~\eqref{eq:traj}, it follows that for each $m > n$, the trajectory $\prts{v_m}_{m > n}$
satisfies the following equations
\begin{align}
   x_m & = \cA_{m,n} \xi + \sum_{k=n}^{m-1} \cA_{m,k+1} P_{k+1} f_k(x_k,y_k),
      \label{eq:dyn-split1a}\\
   y_m  &= \cA_{m,n} \eta +\sum_{k=n}^{m-1} \cA_{m,k+1} Q_{k+1} f_k(x_k,y_k).
      \label{eq:dyn-split1b}
\end{align}
In view of the forward invariance required in \eqref{thm:global:invar}, each trajectory of~\eqref{eq:dyn} starting in $\cV_{\phi,n}$ must be
in $\cV_{\phi,m}$ for every $\prts{m,n} \in \N_\ge^2$, and thus the equations~\eqref{eq:dyn-split1a} and~\eqref{eq:dyn-split1b} can be written in
the form
\begin{align}
   & x_m = \cA_{m,n} \xi + \sum_{k=n}^{m-1} \cA_{m,k+1} P_{k+1}
      f_k(x_k,\phi_k(x_k)), \label{eq:dyn-split2a}\\
   & \phi_m (x_m) = \cA_{m,n} \phi_n(\xi) + \sum_{k=n}^{m-1} \cA_{m,k+1}
      Q_{k+1} f_k (x_k,\phi_k(x_k)). \label{eq:dyn-split2b}
\end{align}
To prove that equations~\eqref{eq:dyn-split2a} and~\eqref{eq:dyn-split2b} have solutions we will use Banach fixed point theorem in some suitable complete metric spaces.

In $\cX$ we define a metric by
\begin{equation}\label{def:metric:X}
   d(\phi,\psi)
   = \sup\set{\dfrac{\|\phi_n(\xi) - \psi_n(\xi)\|}{\|\xi\|} : n \in \N
         \text{ and } \xi \in E_n\setminus \set{0}}.
\end{equation}
for each $\phi=(\phi_n)_{n \in \N}$, $\psi=(\psi_n)_{n \in \N} \in \cX$.
It is easy to see that $\cX$ is a complete metric space with the metric
defined by~\eqref{def:metric:X}.

Let $\cB_n$ be the space of all sequences $x=\prts{x_m}_{m\ge n}$
of functions $x_m \colon E_n \to E_m$ such that
\begin{align}
   & x_m(0) =0 \text{ for every } m \ge n, \label{cond-x_m-0}\\
   & \normn{x} = \sup\set{\dfrac{\|x_m(\xi)\|}{a_{m,n} \|\xi\|}
      \colon m \ge n, \ \xi \in E_n \setm{0}} < +\infty. \label{cond-x_m-1}
\end{align}
From~\eqref{cond-x_m-1} we obtain the following estimates
\begin{equation} \label{cond-x_m-1a}
   \|x_m(\xi)\|
   \le a_{m,n} \normn{x} \|\xi\|
\end{equation}
for every $m \ge n$ and every $\xi \in E_n$. It is easy to see that $\prts{\cB_n, \normn{\cdot}}$ is a Banach space.

\begin{lemma}\label{lemma:Exist-Suc-x_m}
   For each $\phi\in \cX$ and $n \in \N$ there exists a unique sequence $x=x^{\phi}\in \cB_n$ satisfying equation~\eqref{eq:dyn-split2a}. Moreover
   \begin{align}
      & x^\phi_n(\xi) = \xi, \label{cond-x^phi_n(xi)=xi}\\
      & \normn{x^\phi} \le \dfrac{1}{1 - 2 \alpha},\label{eq:norm_x^phi}\\
      & \label{ineq:x^phi_m(xi)- x^phi_m(barxi)<=...}
         \|x^\phi_m(\xi) - x^\phi_m(\bar\xi)\|
         \le \dfrac{1}{1-2\alpha} a_{m,n} \|\xi-\bar\xi\|
   \end{align}
   for every $m \ge n$ and $\xi$, $\bar\xi \in E_n$. Furthermore,
   \begin{equation} \label{ineq:d_n(x^phi-x^psi)<=d(phi,psi)}
      \normn{x^\phi - x^\psi}
      \le \dfrac{\alpha}{\prts{1-2\alpha}^2} d(\phi,\psi)
   \end{equation}
   for each $\phi, \psi \in \cX$.
\end{lemma}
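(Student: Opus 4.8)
The plan is to set up a fixed-point argument in the Banach space $\prts{\cB_n, \normn{\cdot}}$. For a fixed $\phi \in \cX$ and $n \in \N$, define an operator $T = T^\phi$ on $\cB_n$ by
\begin{equation*}
   (Tx)_m(\xi)
   = \cA_{m,n} \xi + \sum_{k=n}^{m-1} \cA_{m,k+1} P_{k+1} f_k(x_k(\xi), \phi_k(x_k(\xi)))
\end{equation*}
for $m \ge n$ and $\xi \in E_n$, with $(Tx)_n(\xi) = \xi$. A fixed point of $T$ is exactly a solution of~\eqref{eq:dyn-split2a} lying in $\cB_n$, and it automatically satisfies~\eqref{cond-x^phi_n(xi)=xi}. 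The first step is to check that $T$ maps $\cB_n$ into itself: using $f_k(0)=0$ so that $\|f_k(u)\| \le \Lip(f_k)\|u\|$, the bound~\eqref{cond-phi-1a} giving $\|(u,\phi_k(u))\| \le 2\|u\|$ (in whatever product norm is fixed — here I would note the norm on $E_k \times F_k$ is taken so that $\|(u,v)\| \le \|u\| + \|v\|$), the dichotomy bound~\ref{eq:split4} in the form $\|\cA_{m,k+1}P_{k+1}\| = \|\cA_{m,k+1}P_{k+1}\cA_{k+1,n}(\cA_{k+1,n})^{-1}\|$ — more simply $\|\cA_{m,k+1}P_{k+1} v\| \le a_{m,k+1}\|v\|$ for $v \in E_{k+1}$ together with $\|P_{k+1}f_k(\cdot)\| \le a_{k,k}\cdots$; cleanly, $\|\cA_{m,k+1}P_{k+1}f_k(x_k(\xi),\phi_k(x_k(\xi)))\| \le a_{m,k+1}\cdot 2\Lip(f_k)\|x_k(\xi)\| \le 2 a_{m,k+1} a_{k,n}\Lip(f_k)\normn{x}\|\xi\|$. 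Also $\|\cA_{m,n}\xi\| \le a_{m,n}\|\xi\|$. Summing and dividing by $a_{m,n}\|\xi\|$ gives $\normn{Tx} \le 1 + 2\alpha\normn{x}$ by the definition~\eqref{eq:alpha} of $\alpha$, so $T(\cB_n) \subseteq \cB_n$.

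Next I would show $T$ is a contraction. Given $x, \bar x \in \cB_n$, the Lipschitz property of $f_k$ and~\eqref{cond-phi-1} give $\|f_k(x_k,\phi_k(x_k)) - f_k(\bar x_k, \phi_k(\bar x_k))\| \le \Lip(f_k)(\|x_k - \bar x_k\| + \|\phi_k(x_k)-\phi_k(\bar x_k)\|) \le 2\Lip(f_k)\|x_k(\xi) - \bar x_k(\xi)\|$; combined with $\|x_k(\xi)-\bar x_k(\xi)\| \le a_{k,n}\normn{x - \bar x}\|\xi\|$ (immediate from the definition of the norm, since $x - \bar x$ vanishes at $\xi = 0$) and the dichotomy bound, one gets $\normn{Tx - T\bar x} \le 2\alpha \normn{x - \bar x}$. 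Since $2\alpha < 1$ by~\eqref{ine:alpha_beta}, Banach's fixed point theorem yields the unique $x^\phi \in \cB_n$. The bound~\eqref{eq:norm_x^phi} follows from $\normn{x^\phi} \le 1 + 2\alpha\normn{x^\phi}$.

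For the Lipschitz estimate~\eqref{ineq:x^phi_m(xi)- x^phi_m(barxi)<=...}, I would iterate the integral equation~\eqref{eq:dyn-split2a} directly: writing $\delta_m = \|x^\phi_m(\xi) - x^\phi_m(\bar\xi)\|$, we have $\delta_m \le a_{m,n}\|\xi - \bar\xi\| + \sum_{k=n}^{m-1} 2 a_{m,k+1}\Lip(f_k)\delta_k$, and dividing by $a_{m,n}$ and taking the supremum of $\delta_k / (a_{k,n}\|\xi-\bar\xi\|)$ over $n \le k \le m$ gives, via~\eqref{eq:alpha}, a bound $\le 1 + 2\alpha$ times that supremum, hence the supremum is $\le 1/(1-2\alpha)$, which is~\eqref{ineq:x^phi_m(xi)- x^phi_m(barxi)<=...}. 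Finally, for the dependence on $\phi$, I would compare the fixed-point equations for $x^\phi$ and $x^\psi$: subtracting, the difference $\cA_{m,k+1}P_{k+1}[f_k(x^\phi_k,\phi_k(x^\phi_k)) - f_k(x^\psi_k,\psi_k(x^\psi_k))]$ is split by inserting $f_k(x^\psi_k, \phi_k(x^\psi_k))$; the first piece is controlled by $2\Lip(f_k)\|x^\phi_k(\xi) - x^\psi_k(\xi)\| \le 2\Lip(f_k) a_{k,n}\normn{x^\phi - x^\psi}\|\xi\|$, and the second by $\Lip(f_k)\|\phi_k(x^\psi_k(\xi)) - \psi_k(x^\psi_k(\xi))\| \le \Lip(f_k) d(\phi,\psi)\|x^\psi_k(\xi)\| \le \Lip(f_k) d(\phi,\psi) a_{k,n} \normn{x^\psi}\|\xi\| \le \Lip(f_k)d(\phi,\psi)a_{k,n}\|\xi\|/(1-2\alpha)$ using~\eqref{eq:norm_x^phi}. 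Summing, dividing by $a_{m,n}\|\xi\|$, and using~\eqref{eq:alpha} twice gives $\normn{x^\phi - x^\psi} \le 2\alpha\normn{x^\phi - x^\psi} + \frac{2\alpha}{1-2\alpha}d(\phi,\psi)$, whence $\normn{x^\phi - x^\psi} \le \frac{2\alpha}{(1-2\alpha)^2}d(\phi,\psi)$; I would need to reconcile the constant with the stated $\frac{\alpha}{(1-2\alpha)^2}$, presumably by a slightly sharper splitting of the $\phi$-versus-$\psi$ term (e.g. evaluating $f_k$ and the graph functions at a common argument so only one factor of $2$ appears, or absorbing the graph-comparison into the argument directly). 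The main obstacle I anticipate is precisely this bookkeeping of the constant $2$'s coming from the product norm on $E_k \oplus F_k$ versus the bound~\eqref{cond-phi-1a}; the existence, uniqueness, and contraction steps are routine once the norm convention is fixed.
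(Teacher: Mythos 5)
Your proposal is essentially the paper's proof: a Banach fixed-point argument for the operator $J=T^\phi$ on $\cB_n$, the bound $\normn{Jx}\le 1+2\alpha\normn{x}$ to get self-mapping and \eqref{eq:norm_x^phi}, the contraction constant $2\alpha$, and, for \eqref{ineq:d_n(x^phi-x^psi)<=d(phi,psi)}, the same intermediate insertion $f_k(x^\psi_k,\phi_k(x^\psi_k))$ that reduces the comparison to $2\Lip(f_k)\|x^\phi_k-x^\psi_k\|$ plus $\Lip(f_k)\,d(\phi,\psi)\,\|x^\psi_k\|$.

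The constant in your last step is not actually a problem --- you simply mislabeled it. After the splitting you have, for each $k$, a factor $\Lip(f_k)\,a_{k,n}\|\xi\|\bigl(2\normn{x^\phi-x^\psi}+\tfrac{1}{1-2\alpha}d(\phi,\psi)\bigr)$; summing against $a_{m,k+1}$ and invoking \eqref{eq:alpha} \emph{once} (not twice --- the bracket is independent of $k$) yields
\[
\normn{x^\phi-x^\psi}\le 2\alpha\,\normn{x^\phi-x^\psi}+\frac{\alpha}{1-2\alpha}\,d(\phi,\psi),
\]
with $\alpha$, not $2\alpha$, in front of $d(\phi,\psi)$: the factor $2$ is attached to the $\normn{x^\phi-x^\psi}$ term only. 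Solving gives exactly $\frac{\alpha}{(1-2\alpha)^2}d(\phi,\psi)$, so no ``sharper splitting'' is needed.

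The only genuine divergence from the paper is your treatment of \eqref{ineq:x^phi_m(xi)- x^phi_m(barxi)<=...}. The paper shows that $J$ preserves the set of $x\in\cB_n$ satisfying $\|x_m(\xi)-x_m(\bar\xi)\|\le\tfrac{1}{1-2\alpha}a_{m,n}\|\xi-\bar\xi\|$, starts the Picard iteration from $z_m=\cA_{m,n}P_n$ (which lies in that set), and passes to the limit. You instead run a direct bootstrap on $D_j:=\|x^\phi_j(\xi)-x^\phi_j(\bar\xi)\|/(a_{j,n}\|\xi-\bar\xi\|)$: with $S_m:=\sup_{n\le j\le m}D_j$ (finite since $x^\phi\in\cB_n$ and the sup is over finitely many indices), the recursion $D_j\le 1+2\alpha S_{j-1}$ gives $S_m\le 1+2\alpha S_m$ and hence $S_m\le 1/(1-2\alpha)$ for all $m$. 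Both arguments are correct; yours avoids passing to the limit in the iteration at the cost of the (easy, but worth stating) finiteness check on $S_m$, while the paper's ``invariant cone'' argument avoids that check by propagating the estimate through the fixed-point scheme. It would be worth writing the induction explicitly, since as phrased the step from $D_m\le 1+2\alpha S_m$ to $S_m\le 1+2\alpha S_m$ requires applying the recursion to every $j\le m$, not just $j=m$.
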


\begin{proof}
   Given $\phi \in \cX$, we define an operator $J = J_\phi$ in~$\cB_n$ by
   \begin{equation} \label{def:J}
      (Jx)_m(\xi)=
      \begin{cases}
         \xi & \text{ if } m = n,\\
         \cA_{m,n} \xi + \dsum_{k=n}^{m-1} \cA_{m,k+1} P_{k+1}
            f_k(x_k(\xi),\phi_k(x_k(\xi))) & \text{ if } m > n.
      \end{cases}
   \end{equation}
   One can easily verify from~\eqref{cond-x_m-0}, \eqref{cond-phi-0} and
   \eqref{cond-f-0} that $(Jx)_m(0)=0$ for every $m \ge n$.

   Let $x \in \cB_n$ and let $\xi \in E_n$. From~\eqref{def:J},~\eqref{eq:lip_const},~\eqref{cond-phi-1}, ~\eqref{cond-x_m-1} and~\eqref{eq:alpha} it follows for every $m > n $ that
   \begin{align*}
      \|(Jx)_m (\xi)\|
      & \le \| \cA_{m,n}P_n\| \, \|\xi \|
         + \sum_{k=n}^{m-1} \|\cA_{m,k+1} P_{k+1}\| \, \|f_k(x_k(\xi),\phi_k(x_k(\xi)))\|\\
      & \le a_{m,n} \|\xi\| + \sum_{k=n}^{m-1} a_{m,k+1}  \, \Lip(f_k) \,
         \prts{\|x_k(\xi)\|+\|\phi_k(x_k(\xi))\|}\\
      & \le a_{m,n} \|\xi\| + \sum_{k=n}^{m-1} a_{m,k+1}  \, \Lip(f_k) \,
         2 \|x_k(\xi)\|\\
      & \le a_{m,n} \|\xi\| + 2 \sum_{k=n}^{m-1} a_{m,k+1}  \, \Lip(f_k) \,
         a_{k,n} \normn{x} \|\xi\|\\
      & \le a_{m,n} \|\xi\| + 2 \alpha \normn{x} a_{m,n} \|\xi\|\\
      & \le \prts{1 + 2 \alpha \normn{x}} a_{m,n} \|\xi\|
   \end{align*}
   and this implies
   \begin{equation}\label{ineq:abs(Jx)}
      \normn{Jx} \le 1 + 2 \alpha \normn{x}.
   \end{equation}
   Therefore we have the inclusion $J(\cB_n)\subset\cB_n$.

   We now show that $J$ is a contraction in $\cB_n$. Let $x,y\in \cB_n$. Then
   \begin{equation}\label{ineq:norm:J_x_m(xi)-J_y_m(bar_xi)}
      \begin{split}
         & \|(Jx)_m(\xi)-(Jy)_m(\xi)\|\\
         & \le \sum_{k=n}^{m-1} \|\cA_{m,k+1}P_{k+1}\| \
            \|f_k(x_k(\xi),\phi_k(x_k(\xi))) - f_k(y_k(\xi),\phi_k(y_k(\xi)))\|
      \end{split}
   \end{equation}
   for every $m \ge n$ and every $\xi \in E_n$. By~\eqref{eq:lip_const},~\eqref{cond-phi-1},~\ref{eq:split4} and~\eqref{cond-x_m-1} we have for every $k \ge n$
   \begin{equation}\label{ineq:norm:f_k(x_k)-f_k(y_k)}
      \begin{split}
         & \|f_k(x_k(\xi),\phi_k(x_k(\xi)))
            - f_k(y_k(\xi),\phi_k(y_k(\xi)))\|\\
         & \le \Lip(f_k) \prts{\| x_k(\xi) - y_k(\xi)\|
            + \|\phi_k(x_k(\xi)) - \phi_k(y_k(\xi))\|}\\
         & \le 2 \Lip(f_k) \| x_k(\xi) - y_k(\xi)\| \\
         & \le 2 \Lip(f_k) a_{k,n} \|\xi\| \ \normn{x-y}
      \end{split}
   \end{equation}
   Hence, from \eqref{ineq:norm:J_x_m(xi)-J_y_m(bar_xi)},~\ref{eq:split4}, \eqref{ineq:norm:f_k(x_k)-f_k(y_k)} and~\eqref{eq:alpha} we have
   \begin{align*}
      \|(Jx)_m(\xi) -(Jy)_m(\xi)\|
      & \le 2 \|\xi\| \ \normn{x-y}
         \sum_{k=n}^{m-1} a_{m,k+1} a_{k,n} \Lip(f_k)\\
      & \le 2 \alpha a_{m,n} \|\xi\| \ \normn{x-y}
   \end{align*}
   for every $m \ge n$ and every $\xi \in E_n$ and this implies
      $$ \normn{Jx-Jy} \le 2 \alpha \normn{x-y}.$$
   Since by~\eqref{ine:alpha_beta} we have $\alpha < 1/2$ it follows that $J$ is a contraction in $\cB_n$. Because $\cB_n$ is a Banach space by the Banach fixed point theorem, the map $J$ has a unique fixed point $x^\phi$ in~$\cB_n$, which is thus the desired sequence. Moreover, is obvious that~\eqref{cond-x^phi_n(xi)=xi} is true and by~\eqref{ineq:abs(Jx)} we have
      $$ \normn{x^\phi} \le 1 + 2 \alpha \normn{x^\phi}$$
   and since $\alpha < 1/2$ we have~\eqref{eq:norm_x^phi}.

   To prove~\eqref{ineq:x^phi_m(xi)- x^phi_m(barxi)<=...} we will first prove that for every $x \in \cB_n$, if
      $$ \|x_m(\xi)-x_m(\bar\xi)\|
         \le \dfrac{1}{1-2\alpha} a_{m,n} \|\xi-\bar\xi\|$$
   for every $m \ge n$ and every $\xi,\bar\xi \in E_n$, then
      $$ \|\prts{Jx}_m(\xi)-\prts{Jx}_m(\bar\xi)\|
         \le \dfrac{1}{1-2\alpha} a_{m,n} \|\xi-\bar\xi\|$$
   for every $m \ge n$ and every $\xi,\bar\xi \in E_n$. In fact
   \begin{align*}
      \|\prts{Jx}_m(\xi)-\prts{Jx}_m(\bar\xi)\|
      & \le \|\cA_{m,n}P_n\| \|\xi-\bar\xi\|
            + \dsum_{k=n}^{m-1} \|\cA_{m,k+1} P_{k+1}\| \gamma_k\\
      & \le a_{m,n} \|\xi-\bar\xi\| + \dsum_{k=n}^{m-1} a_{m,k+1} \gamma_k,
   \end{align*}
   where $\gamma_k = \|f_k(x_k(\xi),\phi_k(x_k(\xi))) - f_k(x_k(\bar\xi),\phi_k(x_k(\bar\xi)))\|.$ Since
   \begin{align*}
      \gamma_k
      & \le \Lip(f_k) \prts{\|x_k(\xi) - x_k(\bar\xi)\|
         + \|\phi_k(x_k(\xi))- \phi_k(x_k(\bar\xi))\|}\\
      & \le 2 \Lip(f_k) \|x_k(\xi) - x_k(\bar\xi)\| \\
      & \le \dfrac{2}{1-2\alpha} \Lip(f_k) a_{k,n} \|\xi-\bar\xi\|
   \end{align*}
   we have
   \begin{align*}
      \|\prts{Jx}_m(\xi)-\prts{Jx}_m(\bar\xi)\|
      & \le a_{m,n} \|\xi-\bar\xi\| + \dfrac{2}{1-2\alpha} \|\xi-\bar\xi\|
         \dsum_{k=n}^{m-1} a_{m,k+1} a_{k,n} \Lip(f_k)\\
      & \le a_{m,n} \|\xi-\bar\xi\|
         + \dfrac{2\alpha}{1-2\alpha} a_{m,n} \|\xi-\bar\xi\|\\
      & = \dfrac{1}{1-2\alpha} a_{m,n} \|\xi-\bar\xi\|.
   \end{align*}
   Now fix $z = \prts{z_m}_{m \ge n} = \prts{\cA_{m,n} P_n}_{m \ge n} \in B_n$. Since
      $$ \|z_m(\xi) - z_m(\bar\xi)\|
         \le a_{m,n} \|\xi-\bar\xi\|
         \le \dfrac{1}{1-2\alpha} a_{m,n} \|\xi-\bar\xi\|,$$
   we have
      $$ \|\prts{J^k z}_m(\xi) - \prts{J^k z}_m(\bar\xi)\|
         \le \dfrac{1}{1-2\alpha} a_{m,n} \|\xi-\bar\xi\|$$
   for every $k \in \N$. Letting $k \to +\infty$ in the last inequality we have~\eqref{ineq:x^phi_m(xi)- x^phi_m(barxi)<=...}.

   Next we will prove~\eqref{ineq:d_n(x^phi-x^psi)<=d(phi,psi)}. Let $\phi, \psi \in \cX$. From~\eqref{eq:dyn-split2a} we have
   \begin{equation}\label{ineq:norm:x_m^phi(xi)-x_m^psi(xi)}
      \begin{split}
         & \|x_m^\phi(\xi) - x_m^\psi(\xi)\|\\
         & \le \dsum_{k=n}^{m-1} \|\cA_{m,k+1} P_{k+1}\| \,
            \|f_k(x^\phi_k(\xi),\phi_k(x^\phi_k(\xi)))
            - f_k(x^\psi_k(\xi),\psi_k(x^\psi_k(\xi)))\|
      \end{split}
   \end{equation}
   for every $m \ge n$ and every $\xi \in E_n$. By~\eqref{eq:lip_const},~\eqref{cond-phi-1},~\eqref{cond-x_m-1}, ~\eqref{def:metric:X} and~\eqref{cond-x_m-1a} it follows that
   \begin{equation}\label{ineq:norm:f_k(x^phi_k...)- f_k(x^psi_k...)}
      \begin{split}
         & \|f_k(x^\phi_k(\xi),\phi_k(x^\phi_k(\xi)))
               - f_k(x^\psi_k(\xi),\psi_k(x^\psi_k(\xi)))\|\\
         & \le \Lip(f_k) \prts{\|x^\phi_k(\xi) - x^\psi_k(\xi)\|
            + \|\phi_k (x^\phi_k(\xi)) - \psi_k(x^\psi_k(\xi))\|}\\
         & \le \Lip(f_k) \prts{2 \|x^\phi_k(\xi) - x^\psi_k(\xi)\|
            + \|\phi_k (x^\psi_k(\xi)) - \psi_k(x^\psi_k(\xi))\|}\\
         & \le \Lip(f_k) \prtsr{2 a_{k,n} \|\xi\| \normn{x^\phi-x^\psi}
            + \|x^\psi_k(\xi)\| d(\phi,\psi)}\\
         & \le \Lip(f_k) a_{k,n} \|\xi\| \prtsr{2 \normn{x^\phi-x^\psi}
            + \dfrac{1}{1-2\alpha} d(\phi,\psi)}
      \end{split}
   \end{equation}
   for every $k \ge n$. Hence by~\eqref{ineq:norm:x_m^phi(xi)-x_m^psi(xi)}, \eqref{ineq:norm:f_k(x^phi_k...)- f_k(x^psi_k...)},~\ref{eq:split4} and~\eqref{eq:alpha} we get
   \begin{align*}
      \|x_m^\phi(\xi) - x_m^\psi(\xi)\|
      & \le \|\xi\| \prtsr{2 \normn{x^\phi-x^\psi}
         + \dfrac{1}{1-2\alpha} d(\phi,\psi)}
         \dsum_{k=n}^m a_{m,k+1}  a_{k,n} \Lip(f_k)\\
      & \le a_{m,n} \|\xi\| \prtsr{2 \alpha \normn{x^\phi - x^\psi}
         + \dfrac{\alpha}{1-2\alpha} d(\phi,\psi)}
   \end{align*}
   for every $m \ge n$ and every $\xi \in E_n$ and this implies
      $$ \normn{x^\phi-x^\psi}
         \le 2 \alpha \normn{x^\phi-x^\psi}
            + \dfrac{\alpha}{1-2\alpha} d(\phi,\psi).$$
   Therefore
      $$ \normn{x^\phi-x^\psi}
         \le \dfrac{\alpha}{\prts{1-2\alpha}^2} d(\phi,\psi).$$
\end{proof}

We now represent by $\prts{x_{n,k}^\phi}_{k \ge n} \in \cB_n$ the unique sequence given by Lemma~\ref{lemma:Exist-Suc-x_m}.

\begin{lemma} \label{lemma:equiv}
   Let $\phi \in \cX$. The following properties are equivalent:
   \begin{enumerate}[\lb=$\alph*)$,\lm=5mm]
      \item for every $n \in \N$, $m \ge n$ and $\xi \in E_n$ the identity~\eqref{eq:dyn-split2b} holds with $x_k = x_{n,k}^\phi$;
      \item for every $n \in \N$ and every $\xi \in E_n$
          \begin{equation} \label{eq:phi_n}
            \phi_n(\xi)
               = - \sum_{k=n}^\infty (\cA_{k+1,n}|_{F_n})^{-1}
                  Q_{k+1} f_k(x^\phi_{n,k}(\xi), \phi_k(x^\phi_{n,k}(\xi)))
          \end{equation}
          holds.
   \end{enumerate}
\end{lemma}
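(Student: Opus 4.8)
The plan is to move between the graph identity~\eqref{eq:dyn-split2b} and the fixed-point equation~\eqref{eq:phi_n} by inverting the evolution on the subspaces $F_n$ and letting $m\to\infty$; the reverse implication will additionally need a cocycle property of the ``horizontal'' trajectories $x^\phi$, which I would deduce from the uniqueness half of Lemma~\ref{lemma:Exist-Suc-x_m}.

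First I would record the bookkeeping facts about the restricted inverses. From (S1)--(S3) one has $\cA_{\ell,n}|_{F_n}=\cA_{\ell,j}|_{F_j}\circ\cA_{j,n}|_{F_n}$ for $n\le j\le \ell$, hence $(\cA_{\ell,n}|_{F_n})^{-1}=(\cA_{j,n}|_{F_n})^{-1}(\cA_{\ell,j}|_{F_j})^{-1}$; since $\cA_{m,k+1}$ acts as $\cA_{m,k+1}|_{F_{k+1}}$ on the range of $Q_{k+1}$, this telescopes to
\begin{equation*}
   (\cA_{m,n}|_{F_n})^{-1}\cA_{m,k+1}Q_{k+1}=(\cA_{k+1,n}|_{F_n})^{-1}Q_{k+1}
   \qquad (n\le k+1\le m),
\end{equation*}
and dually $\cA_{m,n}|_{F_n}(\cA_{k+1,n}|_{F_n})^{-1}Q_{k+1}$ equals $\cA_{m,k+1}Q_{k+1}$ when $k+1\le m$ and equals $(\cA_{k+1,m}|_{F_m})^{-1}Q_{k+1}$ when $k\ge m$. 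I would also establish the cocycle property $x^\phi_{m,\ell}(x^\phi_{n,m}(\xi))=x^\phi_{n,\ell}(\xi)$ for $n\le m\le\ell$ and $\xi\in E_n$: splitting the sum in~\eqref{eq:dyn-split2a} at the index $m$ (using $\cA_{\ell,n}\xi\in E_m$ and $\cA_{\ell,k+1}=\cA_{\ell,m}\cA_{m,k+1}$ for $k+1\le m$) shows that $(x^\phi_{n,\ell}(\xi))_{\ell\ge m}$ satisfies the defining relation of the family starting at time $m$ with initial value $x^\phi_{n,m}(\xi)$; that relation has a unique solution of growth comparable to $a_{\ell,m}$ because the associated operator is a contraction (this is the pointwise version of the computation in the proof of Lemma~\ref{lemma:Exist-Suc-x_m}, valid since $\alpha<1/2$ by~\eqref{ine:alpha_beta}), so this solution must be $(x^\phi_{m,\ell}(x^\phi_{n,m}(\xi)))_{\ell\ge m}$.

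For (a)$\Rightarrow$(b) I would fix $n$ and $\xi$, set $x_k=x^\phi_{n,k}(\xi)$, note that all terms of~\eqref{eq:dyn-split2b} lie in $F_m$ (by (S2)), and apply $(\cA_{m,n}|_{F_n})^{-1}$; by the telescoping identity this gives
\begin{equation*}
   (\cA_{m,n}|_{F_n})^{-1}\phi_m(x_m)=\phi_n(\xi)+\sum_{k=n}^{m-1}(\cA_{k+1,n}|_{F_n})^{-1}Q_{k+1}f_k(x_k,\phi_k(x_k)).
\end{equation*}
The left-hand side is bounded by $b_{m,n}\|\phi_m(x_m)\|\le b_{m,n}\|x_m\|\le \tfrac{1}{1-2\alpha}a_{m,n}b_{m,n}\|\xi\|$ by~\ref{eq:split5}, \eqref{cond-phi-1a}, \eqref{cond-x_m-1a} and~\eqref{eq:norm_x^phi}, hence it vanishes as $m\to\infty$ by~\eqref{eq:CondicaoTeo}; the series on the right converges absolutely because its $k$-th term is at most $\tfrac{2}{1-2\alpha}b_{k+1,n}a_{k,n}\Lip(f_k)\|\xi\|$, which is summable by~\eqref{eq:beta}. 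Letting $m\to\infty$ yields~\eqref{eq:phi_n}.

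For (b)$\Rightarrow$(a) I would fix $n$, $m\ge n$, $\xi$, write $w_k=x^\phi_{n,k}(\xi)$, apply the bounded operator $\cA_{m,n}|_{F_n}$ to~\eqref{eq:phi_n} at $(n,\xi)$ (so convergence is preserved) and split the resulting series at $k=m$ via the dual telescoping identities: the part $n\le k\le m-1$ equals $-\sum_{k=n}^{m-1}\cA_{m,k+1}Q_{k+1}f_k(w_k,\phi_k(w_k))$, and the part $k\ge m$ equals $-\sum_{k=m}^{\infty}(\cA_{k+1,m}|_{F_m})^{-1}Q_{k+1}f_k(w_k,\phi_k(w_k))$. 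By the cocycle property, $w_k=x^\phi_{m,k}(w_m)$ for $k\ge m$, so this last series is the right-hand side of~\eqref{eq:phi_n} evaluated at $(m,w_m)$, i.e.\ it equals $\phi_m(w_m)$. Rearranging the resulting identity $\cA_{m,n}\phi_n(\xi)=-\sum_{k=n}^{m-1}\cA_{m,k+1}Q_{k+1}f_k(w_k,\phi_k(w_k))+\phi_m(w_m)$ gives exactly~\eqref{eq:dyn-split2b} with $x_k=w_k$. I expect the cocycle property of $x^\phi$ to be the only ingredient requiring more than bookkeeping; everything else is telescoping the restricted inverses and controlling tails through~\eqref{eq:beta} and~\eqref{eq:CondicaoTeo}.
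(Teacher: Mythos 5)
Your proposal is correct and follows essentially the same route as the paper: apply the restricted inverse, telescope, let $m\to\infty$ using \eqref{eq:CondicaoTeo} and \eqref{eq:beta} for (a)$\Rightarrow$(b), then apply $\cA_{m,n}|_{F_n}$, split the series at $k=m$, and invoke the cocycle property $x^\phi_{n,k}(\xi)=x^\phi_{m,k}(x^\phi_{n,m}(\xi))$ (justified via uniqueness in Lemma~\ref{lemma:Exist-Suc-x_m}) for (b)$\Rightarrow$(a). The only difference is that you make the cocycle step fully explicit, where the paper simply cites "the uniqueness of the sequences $x^\phi$"; you also correct the paper's typo $(\cA_{k+1,m}|_{F_n})^{-1}$ to $(\cA_{k+1,m}|_{F_m})^{-1}$.
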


\begin{proof}
   First we prove that the series in~\eqref{eq:phi_n} is convergent.
   From~\ref{eq:split5},~\eqref{eq:lip_const},~\eqref{cond-phi-1a}, \eqref{cond-x_m-1a} and~\eqref{eq:beta}, we conclude that for every $n \in \N$ and every $\xi \in E_n$
   \begin{align*}
      & \sum_{k=n}^{\infty} \|(\cA_{k+1,n}|_{F_n})^{-1} Q_{k+1}
         f_k(x_{n,k}^{\phi}(\xi),\phi_k(x_{n,k}^{\phi}(\xi)))\|\\
      & \le \sum_{k=n}^{\infty} \|(\cA_{k+1,n}|_{F_n})^{-1} Q_{k+1}\| \
         \|f_k(x_{n,k}^{\phi}(\xi),\phi_k(x_{n,k}^{\phi}(\xi)))\| \\
      & \le \sum_{k=n}^{\infty} b_{k+1,n} \Lip(f_k)
         \prts{\|x^\phi_{n,k}(\xi)\| + \|\phi_k(x^\phi_{n,k}(\xi))\|}\\
      & \le 2 \|\xi\| \normn{x^\phi}
         \sum_{k=n}^{\infty} b_{k+1,n} a_{k,n} \Lip(f_k)\\
      & \le 2 \beta \|\xi\| \normn{x^\phi}
   \end{align*}
   and thus the series converges.

   Now, let us suppose that~\eqref{eq:dyn-split2b} holds with $x = x^\phi$ for every $n \in \N$, every $m \ge n$ and every $\xi \in E_n$. Then, since $(\cA_{m,n}|_{F_n})^{-1}\cA_{m,k+1}|_{F_{k+1}}=(\cA_{k+1,n}|_{F_n})^{-1}$ for $n \le k \le m-1$, equation~\eqref{eq:dyn-split2b} can be written in the following equivalent form\small
   \begin{equation} \label{eq:equiv1A} 
      \phi_{n}(\xi) = (\cA_{m,n}|_{F_n})^{-1} \phi_m(x_{n,m}^{\phi}(\xi)) -
      \sum_{k=n}^{m-1} (\cA_{k+1,n}|_{F_n})^{-1}
      Q_{k+1} f_k(x_{n,k}^{\phi}(\xi),\phi_k(x_{n,k}^{\phi}(\xi))).
   \end{equation}\normalsize
   Using~\ref{eq:split5},~\eqref{cond-phi-1a} and~\eqref{cond-x_m-1a},
   we have
   \begin{align*}
      \|(\cA_{m,n}|_{F_n})^{-1} \phi_m(x_{n,m}^{\phi}(\xi))\|
      & = \|(\cA_{m,n}|_{F_n})^{-1} Q_m \phi_m(x_{n,m}^\phi(\xi))\| \\
      & \le b_{m,n} \|x_{n,m}^\phi(\xi)\|\\
      & \le b_{m,n} a_{m,n} \|\xi\| \normn{x} \|\xi\|
   \end{align*}
   and by~\eqref{eq:CondicaoTeo} this converge to zero when $m\to\infty$.
   Hence, letting $m\to\infty$ in \eqref{eq:equiv1A} we obtain the
   identity~\eqref{eq:phi_n} for every $n \in \N$ and every $\xi \in E_n$.

   We now assume that for every $n \in \N$, $m \ge n$ and $\xi \in E_n$ the identity~\eqref{eq:phi_n} holds. Therefore
      $$ \cA_{m,n} \phi_n(\xi)
         = - \sum_{k=n}^{\infty} \cA_{m,n} (\cA_{k+1,n}|_{F_n})^{-1}
         Q_{k+1} f_k(x_{n,k}^{\phi}(\xi),\phi_k(x_{n,k}^{\phi}(\xi))),$$ and
   thus it follows from \eqref{eq:phi_n} and the uniqueness of the sequences
   $x^\phi$ that
   \begin{align*}
      \cA_{m,n} \phi_n(\xi) & + \sum_{k=n}^{m-1} \cA_{m,k+1}
         Q_{k+1} f_k(x_{n,k}^{\phi}(\xi),\phi_k(x_{n,k}^{\phi}(\xi))) \\
      & = - \sum_{k=m}^{\infty} (\cA_{k+1,m}|_{F_n})^{-1}
            Q_{k+1} f_k(x_{n,k}^{\phi}(\xi),\phi_k(x_{n,k}^{\phi}(\xi)))\\
      & = - \sum_{k=m}^{\infty} (\cA_{k+1,m}|_{F_n})^{-1}
            Q_{k+1} f_k(x_{m,k}^{\phi}(x_{n,m}^{\phi}(\xi)),
            \phi_k(x_{m,k}^{\phi}(x_{n,m}^{\phi}(\xi))))\\
      & = \phi_m(x_{n,m}^{\phi}(\xi))
   \end{align*}
   for every $n \in \N$, every $m \ge n$ and every $\xi \in E_n$. This proves the lemma.
\end{proof}

\begin{lemma} \label{lemma:Exist-Suc-phi}
   There is a unique $\phi \in \cX$ such that
      $$ \phi_n(\xi)
         = - \sum_{k=n}^\infty (\cA_{k+1,n}|_{F_n})^{-1}
            Q_{k+1} f_k(x^\phi_k(\xi), \phi_k(x^\phi_k(\xi)))$$
   for every $n \in \N$ and every $\xi \in E_n$.
\end{lemma}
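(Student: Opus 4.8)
The plan is to realize the identity as a fixed-point equation and invoke the Banach fixed point theorem on the complete metric space $(\cX,d)$ introduced in~\eqref{def:metric:X}. For $\phi\in\cX$ define a new sequence $T\phi=\bigl((T\phi)_n\bigr)_{n\in\N}$ of maps $(T\phi)_n\colon E_n\to F_n$ by
\[
   (T\phi)_n(\xi)=-\sum_{k=n}^{\infty}(\cA_{k+1,n}|_{F_n})^{-1}Q_{k+1}
      f_k\bigl(x_{n,k}^{\phi}(\xi),\phi_k(x_{n,k}^{\phi}(\xi))\bigr),
\]
where $\prts{x_{n,k}^{\phi}}_{k\ge n}\in\cB_n$ is the sequence furnished by Lemma~\ref{lemma:Exist-Suc-x_m} (in the statement of the present lemma $x^\phi_k$ denotes this $x^\phi_{n,k}$). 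The estimate carried out at the beginning of the proof of Lemma~\ref{lemma:equiv} already shows that this series converges absolutely, and each term lies in $F_n$ because $(\cA_{k+1,n}|_{F_n})^{-1}Q_{k+1}$ has range in $F_n$; so $T\phi$ is well defined, and by Lemma~\ref{lemma:equiv} a fixed point of $T$ is exactly a $\phi\in\cX$ satisfying the asserted identity.

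First I would check that $T(\cX)\subseteq\cX$. Condition~\eqref{cond-phi-0} for $T\phi$ follows from~\eqref{cond-f-0},~\eqref{cond-phi-0} and~\eqref{cond-x_m-0}. For the Lipschitz bound~\eqref{cond-phi-1}, I estimate term by term using~\ref{eq:split5},~\eqref{eq:lip_const},~\eqref{cond-phi-1} and the bound~\eqref{ineq:x^phi_m(xi)- x^phi_m(barxi)<=...}, obtaining
\[
   \|(T\phi)_n(\xi)-(T\phi)_n(\bar\xi)\|
   \le\sum_{k=n}^{\infty}b_{k+1,n}\,2\Lip(f_k)\,\|x_{n,k}^{\phi}(\xi)-x_{n,k}^{\phi}(\bar\xi)\|
   \le\frac{2\beta}{1-2\alpha}\,\|\xi-\bar\xi\|,
\]
and since~\eqref{ine:alpha_beta} gives $2\beta<1-2\alpha$ the prefactor is at most $1$, as required.

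Next comes the contraction estimate. For $\phi,\psi\in\cX$, $n\in\N$ and $\xi\in E_n$, subtracting the two series and applying~\ref{eq:split5} together with the pointwise bound on $\|f_k(x_{n,k}^{\phi}(\xi),\phi_k(x_{n,k}^{\phi}(\xi)))-f_k(x_{n,k}^{\psi}(\xi),\psi_k(x_{n,k}^{\psi}(\xi)))\|$ obtained exactly as in~\eqref{ineq:norm:f_k(x^phi_k...)- f_k(x^psi_k...)}, namely $\le\Lip(f_k)\,a_{k,n}\,\|\xi\|\bigl[2\normn{x^\phi-x^\psi}+\tfrac{1}{1-2\alpha}d(\phi,\psi)\bigr]$, then using~\eqref{ineq:d_n(x^phi-x^psi)<=d(phi,psi)} and the algebraic identity $\frac{2\alpha}{(1-2\alpha)^2}+\frac{1}{1-2\alpha}=\frac{1}{(1-2\alpha)^2}$, I arrive at
\[
   \|(T\phi)_n(\xi)-(T\psi)_n(\xi)\|\le\frac{\beta}{(1-2\alpha)^2}\,\|\xi\|\,d(\phi,\psi),
\]
hence $d(T\phi,T\psi)\le\frac{\beta}{(1-2\alpha)^2}\,d(\phi,\psi)$. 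Since~\eqref{ine:alpha_beta} yields $\sqrt{\beta}<1-2\alpha$, i.e.\ $\beta<(1-2\alpha)^2$, the map $T$ is a contraction on $(\cX,d)$, which is complete, and the Banach fixed point theorem produces the unique $\phi\in\cX$ with $T\phi=\phi$.

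The only delicate point — hence the main obstacle — is the bookkeeping in the contraction step: the inner sequences $x_{n,k}^{\phi}$ themselves depend on $\phi$, so the difference of the perturbation terms must be split into the contribution of moving $x_{n,k}^{\phi}\to x_{n,k}^{\psi}$ and the contribution of moving $\phi_k\to\psi_k$, and both must be absorbed using the quantitative estimates of Lemma~\ref{lemma:Exist-Suc-x_m}, in particular~\eqref{ineq:d_n(x^phi-x^psi)<=d(phi,psi)}. Once the clean constant $\tfrac{1}{(1-2\alpha)^2}$ is identified, it is precisely the hypothesis~\eqref{ine:alpha_beta}, through the term $\max\set{2\beta,\sqrt{\beta}}$, that simultaneously guarantees $T(\cX)\subseteq\cX$ (via $2\beta<1-2\alpha$) and that $T$ is a contraction (via $\beta<(1-2\alpha)^2$).
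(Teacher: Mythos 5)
Your proof is correct and is essentially the paper's own argument: you define the same operator (called $\Phi$ in the paper), verify $T(\cX)\subseteq\cX$ via the estimate $\frac{2\beta}{1-2\alpha}\le 1$ (which the paper phrases as $\alpha+\beta<1/2$), and establish the contraction constant $\frac{\beta}{(1-2\alpha)^2}<1$, both exactly as the paper does. Your identification of how the two halves of hypothesis~\eqref{ine:alpha_beta} (the $2\beta$ branch and the $\sqrt{\beta}$ branch) respectively power the invariance and the contraction is an accurate reading of what the paper's proof uses.
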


\begin{proof}
   We consider the operator $\Phi$ defined for each $\phi\in \cX$ by
   \begin{equation} \label{eq:op-Phi}
      (\Phi \phi)_n (\xi) =
         - \dsum_{k=n}^{\infty} (\cA_{k+1,n}|_{F_n})^{-1} Q_{k+1} f_k
            (x_k^{\phi}(\xi), \phi_k(x_k^{\phi}(\xi)))
   \end{equation}
   where $x^{\phi}=(x^\phi_k)_{k\ge n} \in \cB_n$ is the unique sequence given by Lemma~\ref{lemma:Exist-Suc-x_m}. It follows from~\eqref{cond-f-0},~\eqref{cond-x_m-0},~\eqref{cond-phi-0} and~\eqref{eq:op-Phi} that $(\Phi\phi)_n(0)=0$ for each $n\in\N$.

Furthermore, given $n \in \N$ and $\xi, \bar\xi \in E_n$, by~\ref{eq:split5},~\eqref{eq:lip_const},~\eqref{cond-phi-1},~\eqref{ineq:x^phi_m(xi)- x^phi_m(barxi)<=...} and~\eqref{eq:beta} we have
   \begin{align*}
      & \|(\Phi \phi)_n (\xi) - (\Phi \phi)_n (\bar\xi)\| \\
      & \le \sum_{k=n}^{\infty} \|(\cA_{k+1,n}|_{F_n})^{-1} Q_{k+1}\|
         \cdot \|f_k(x^\phi_k(\xi),\phi_k(x^\phi_k(\xi)))
         - f_k(x^\phi_k(\bar\xi),\phi_k(x^\phi_k(\bar\xi)))\|\\
      & \le \sum_{k=n}^{\infty} b_{k+1,n} \ \Lip(f_k) \
         2 \|x^\phi_k(\xi) - x_k^\phi(\bar\xi)\|\\
      & \le \dfrac{2}{1-2\alpha} \|\xi-\bar\xi\|
         \sum_{k=n}^{\infty} b_{k+1,n} \ \Lip(f_k) \ a_{k,n} \\
      & \le \dfrac{2\beta}{1-2\alpha} \|\xi - \bar\xi\|
   \end{align*}
   Since by~\eqref{ine:alpha_beta} the inequality $\alpha + \beta < 1/2$ holds, it follows that
      $$ \|(\Phi \phi)_n (\xi) - (\Phi \phi)_n (\bar\xi)\|
         \le \|\xi - \bar\xi\|.$$
   Therefore $\Phi(\cX)\subset\cX$.

   We now show that $\Phi$ is a contraction. Given $\phi,\psi\in\cX$
   and $n\in \N$, let $x^{\phi}$ and $x^{\psi}$ be the unique sequences given
   by Lemma~\ref{lemma:Exist-Suc-x_m} respectively for $\phi$ and $\psi$.
   By~\ref{eq:split5},~\eqref{ineq:norm:f_k(x^phi_k...)-
   f_k(x^psi_k...)},~\eqref{ineq:d_n(x^phi-x^psi)<=d(phi,psi)}
   and~\eqref{eq:beta} we have
   \begin{align*}
      & \|(\Phi \phi)_n (\xi)-(\Phi \psi)_n(\xi)\| \\
      & \le \sum_{k=n}^{\infty} \|(\cA_{k+1,n}|_{F_n})^{-1} Q_{k+1}\|
         \|f_k(x^\phi_k(\xi),\phi_k(x^\phi_k(\xi)))
         - f_k(x^\psi_k(\xi),\phi_k(x^\psi_k(\xi)))\|\\
      & \le \sum_{k=n}^{\infty} b_{k+1,n} \Lip(f_k) a_{k,n} \|\xi\|
            \prtsr{2 \normn{x^\phi-x^\psi}
            + \dfrac{1}{1-2\alpha} d(\phi,\psi)}\\
      & \le \sum_{k=n}^{\infty} b_{k+1,n} \Lip(f_k) a_{k,n} \|\xi\|
            \prtsr{\dfrac{2\alpha}{\prts{1-2\alpha}^2} + \dfrac{1}{1-2\alpha}} d(\phi,\psi)\\
      & \le \dfrac{1}{\prts{1-2\alpha}^2} \|\xi\| d(\phi,\psi)
         \sum_{k=n}^{\infty} b_{k+1,n} a_{k,n} \Lip(f_k)\\
      & \le \dfrac{\beta}{\prts{1-2\alpha}^2}
         \|\xi\| d(\phi,\psi)
   \end{align*}
   for every $n \in \N$ and every $\xi \in E_n$ and this implies
      $$ d(\Phi \phi, \Phi \psi)
         \le \dfrac{\beta}{\prts{1-2\alpha}^2}  d(\phi,\psi)$$
   Since by~\eqref{ine:alpha_beta} we have $\dfrac{\beta}{\prts{1-2\alpha}^2} < 1$, it follows that $\Phi$ is a contraction in $\cX$. Therefore the map $\Phi$ has a unique fixed point $\phi$ in~$\cX$ that is the desired sequence.
\end{proof}

We are finally in conditions to prove Theorem~\ref{thm:global}.

   By Lemma~\ref{lemma:Exist-Suc-x_m}, for each $\phi\in \cX$ there is
   a unique sequence $x^{\phi}\in\cB_n$ satisfying~\eqref{eq:dyn-split2a}. It
   remains to show that there is a $\phi$ and a corresponding $x^\phi$ that
   satisfie~\eqref{eq:dyn-split2b}. By Lemma~\ref{lemma:equiv}, this is
   equivalent to solve~\eqref{eq:phi_n}. Finally, by
   Lemma~\ref{lemma:Exist-Suc-phi}, there is a unique solution of
   \eqref{eq:phi_n}. This establishes the existence of the invariant manifolds. Moreover, for each $n\in \N$, $m\ge n$ and
   $\xi,\bar{\xi}\in E_n$ it follows from~\eqref{cond-phi-1} that
   \begin{align*}
      & \|\cF_{m,n}(\xi,\phi_n(\xi)) - \cF_{m,n}(\xi,\phi_n(\bar\xi))\|\\
      & \le \|x_m(\xi) - x_m(\bar\xi)\|
         + \|\phi_m(x_m(\xi)) - \phi_m(x_m(\bar\xi))\|\\
      & \le 2 \|x_m(\xi) - x_m(\bar\xi)\|\\
      & \le \dfrac{2}{1-2\alpha} a_{m,n} \|\xi-\bar \xi\|.
   \end{align*}
   Hence we obtain~\eqref{thm:ineq:norm:F_mn(xi...)-F_mn(barxi...)} and the
   theorem is proved.
%-----------------------------------------------------------------------------%
\section{Proof of Theorem~\ref{thm:local}}
%-----------------------------------------------------------------------------%
We will now establish Theorem~\ref{thm:local}.
Since, for each $k \in \N$, $f_k|_{B(r_k)} \colon B(r_k) \to X$ is a Lipschitz function, $f_k$ can be continuously  extended in unique way to the closure of $B(r_k)$ and it is easy to see that the function $\tilde{f}_k \colon X \to X$ given by
\[
\tilde{f}_k =
\begin{cases}
f_k(x) & \text{if} \quad x \in B(r_k) \\
f_k \left( x r_k / \|x\| \right) & \text{if} \quad x \notin B(r_k)
\end{cases}
\]
is Lipschitz with $\Lip(\tilde{f}_k) \le 2\Lip(f_k|_{B(r_k)})$. Thus we have
\[
\tilde{\alpha} = \sup_{\underset{\scriptstyle m \ne n}{(m,n) \in \N_\ge^2}}
\dfrac{1}{a_{m,n}} \dsum_{k=n}^{m-1} a_{m,k+1} a_{k,n} \Lip(\tilde{f_k})
\le 2 \alpha < + \infty
\]
and
\[
\tilde{\beta} = \sup_{n \in \N} \dsum_{k=n}^{+\infty} b_{k+1,n} a_{k,n} \Lip(\tilde{f_k}) \le 2 \beta < + \infty.
\]
According to~\eqref{ine:alpha_beta-local} we have
$$2\tilde{\alpha} + \max\set{2\tilde{\beta}, \sqrt{\tilde{\beta}}}
\le 4 \alpha + \max\set{4 \beta, \sqrt{2\beta}} < 1,$$
and thus Theorem~\ref{thm:global} shows that~\eqref{thm:global:invar} and~\eqref{thm:ineq:norm:F_mn(xi...)-F_mn(barxi...)} hold for the perturbations $\tilde{f_k}$.

Let $\tilde{\cF}(m,n)$ be the operators in~\eqref{eq:dyn} with the functions $f_k$ replaced by the functions $\tilde{f}_k$.
From~\eqref{thm:ineq:norm:F_mn(xi...)-F_mn(barxi...)} we obtain
\begin{equation} \label{eq:aux-local}
\begin{aligned}
\|\tilde{\cF}(m,n)(\xi,\phi_n(\xi))-  \tilde{\cF}(m,n)(\bar\xi,\phi_n(\bar\xi))\|
& \le
\frac{2}{1-2\tilde{\alpha}} \, a_{m,n} \|\xi-\bar\xi\| \\
& \le
 \frac{2}{1-4 \alpha} \, a_{m,n} \|\xi-\bar\xi\|.
\end{aligned}
\end{equation}
for every $\prts{m, n} \in \N^2_\ge$ and every $\xi$, $\bar\xi \in E_n$.
In particular, if $\bar\xi=0$ and $\xi \in B(r_n/(2s_n)) \cap E_n$, we have $(\xi,\phi_n(\xi)) \in B(r_n/s_n)$ and by~\eqref{ine:sn-local} we get
   $$ \| \tilde{\cF}(m,n)(\xi,\phi_n(\xi))\|
      \le \frac{2}{1-4 \alpha} \, a_{m,n} \|\xi\|
      < \frac{2}{1-4\alpha} \, a_{m,n} \, \frac{r_n}{s_n} \le r_m$$
and thus $\tilde{\cF}(m,n) \left(\cV^*_{\phi,n,r_n/(2s_n)} \right) \subseteq \cV^*_{\phi,m,r_m}$, where $\cV_{\phi,n}$ in~\eqref{eq:manifold-local} correspond to the manifolds obtained in Theorem~\ref{thm:global} for the perturbations $\tilde{f}_k$. Since $\tilde{f}_k|_{B(r_k)}=f_k|_{B(r_k)}$ for each $k \in \N$, we obtain
$$\cF(m,n) \left(\cV^*_{\phi,n,r_n/(2s_n)} \right) \subseteq \cV^*_{\phi,m,r_m}.$$
and~\eqref{thm:global:invar-local} holds. Finally, using~\eqref{eq:aux-local} and still recalling that $\tilde{f}_k=f_k$ in $B(r_k)$ for each $k \in \N$, we obtain~\eqref{thm:ineq:norm:F_mn(xi...)-F_mn(barxi...)-local}.
%-----------------------------------------------------------------------------%
\section*{Acknowledgments}
%-----------------------------------------------------------------------------%
This work was partially supported by FCT though Centro de Matemática da Universidade da Beira Interior (project PEst-OE/MAT/UI0212/2011).
%-----------------------------------------------------------------------------%
\bibliographystyle{elsart-num-sort}

%-----------------------------------------------------------------------------%
\end{document}